\numberwithin{equation}{section}
\newtheorem{thm}{Theorem}[section]
\newtheorem{cor}[thm]{Corollary}
\newtheorem{prop}[thm]{Proposition}
\newtheorem{lem}[thm]{Lemma}
\theoremstyle{remark}
\newtheorem{rem}[thm]{Remark}
\theoremstyle{example}
\theoremstyle{definition}
\newtheorem{defn}[thm]{Definition}
\theoremstyle{remark}
\newtheorem{remark}[thm]{Remark}
\newcommand{\R}{\mathbb{R}}
\newcommand{\Z}{\mathbb{Z}}
\newcommand{\C}{\mathbb{C}}
\newcommand{\F}{\mathbb{F}}
\newcommand{\ve}{\varepsilon}
\newcommand{\co}{\mskip0.5mu\colon\thinspace}
\begin{document}

\title{On homological rigidity and flexibility of exact Lagrangian endocobordisms}

\author{Georgios Dimitroglou Rizell}
\author{Roman Golovko}

\begin{abstract}
We show that an exact Lagrangian cobordism $L\subset \R \times P
\times \R$ from a Legendrian submanifold $\Lambda\subset P\times
\R$ to itself satisfies $H_i(L;\F)=H_i(\Lambda;\F)$ for any field
$\F$ in the case when $\Lambda$ admits a spin exact Lagrangian
filling and the concatenation of any spin exact Lagrangian filling
of $\Lambda$ and $L$ is also spin. The main tool used is Seidel's
isomorphism in wrapped Floer homology. In contrast to that, for
loose Legendrian submanifolds of $\C^n \times \R$, we construct
examples of such cobordisms whose homology groups have arbitrary
high ranks. In addition, we prove that the front $S^m$-spinning
construction preserves looseness, which implies certain
forgetfulness properties of it.
\end{abstract}

\address{Universit\'{e} Paris-Sud, D\'{e}partement de Math\'{e}matiques, Bat. 425, 91405
Orsay, France}
\email{georgios.dimitroglou@math.u-psud.fr}
\urladdr{https://sites.google.com/site/georgiosdimitroglourizell/}
\address{Universit\'{e} Paris-Sud, D\'{e}partement de Math\'{e}matiques, Bat. 425, 91405
Orsay, France} \email{roman.golovko@math.u-psud.fr}
\urladdr{http://www.math.u-psud.fr/~golovko/‎}
\date{\today}
\thanks{This work was partially supported by the ERC Starting Grant of Fr\'{e}d\'{e}ric Bourgeois StG-239781-ContactMath.}
\subjclass[2010]{Primary 53D12; Secondary 53D42}

\keywords{exact Lagrangian cobordism, Legendrian contact homology, wrapped Floer homology,
rigidity, flexibility}

\maketitle

\section{Introduction}

\subsection{Main Definitions}
Let $P$ be an exact symplectic $2n$-manifold  with symplectic form $\omega=d\theta$. The {\em contactization of $P$} is the product manifold $P\times \R$ equipped with a contact $1$-form $\alpha:=dz+\theta$, where $z$ is a coordinate on $\R$.
The {\em Reeb vector field of $(P\times \R, \alpha)$} is defined by $\alpha(R_{\alpha})=1$ and $i_{R_{\alpha}}d\alpha= 0$ and is in this case equal to $\partial_{z}$. We call $(\R\times P\times \R, d(e^{t}\alpha))$ the {\em symplectization} of $P\times \R$, where $t$ is a coordinate on the first $\R$-factor. This is again an exact symplectic manifold.

A {\em Legendrian submanifold} of $P\times \R$ is a submanifold of
dimension $n$ which is everywhere tangent to $\ker \alpha$. Observe
that there is a natural projection map $\Pi_{L}:P\times \R\to P$
which is called the {\em Lagrangian projection}. Any Legendrian
embedding can be $C^\infty$-approximated by a Legendrian embedding
$\Lambda$ such that the self-intersections of $\Pi_{L}(\Lambda)$
consist of a finite number of transverse double points. We call
Legendrian submanifolds which satisfy this property {\em chord
generic}. From now on we assume that all Legendrian submanifolds of
$P\times \R$ are closed, orientable, connected, and chord generic.
There is a one-to-one correspondence between the set of double
points of $\Pi_{L}(\Lambda)$ and the set of non-trivial integral
curves of the Reeb vector field $\partial_{z}$ which begin and end
on $\Lambda$. These integral curves are called {\em Reeb chords} of
$\Lambda$ and the set of Reeb chords will be denoted by $\mathcal
R(\Lambda)$. When $P=\C^n$ and $\theta=-\sum y_idx_i$, there is
another natural projection $\Pi_{F}:\C^{n}\times \R\to \R^{n+1}$
defined by $\Pi_{F}((x_{1},y_{1},\dots,z))=(x_{1},\dots,x_{n},z)$
and which is called the {\em front projection}. Observe that
Legendrian submanifolds can be recovered from their front
projections.


{\em Legendrian contact homology} is a part of the Symplectic Field
Theory framework \cite{ITSFT}. It was first defined by Chekanov
\cite{DAOLL} for knots in the standard contact $\C\times \R$ and
then extended to higher-dimensional contact manifolds such as
$\C^{n}\times \R$ \cite{TCHOLSIR} and contactizations $P\times \R$
\cite{LCHIPR} satisfying some additional assumptions. The Legendrian contact homology of $\Lambda$ is a
homology of the differential graded algebra $(\mathcal A(\Lambda),
\partial)$ which is a unital tensor algebra over the
$\Z_2$-vector space generated by the elements of $\mathcal
R(\Lambda)$ (we denote this vector space by $A(\Lambda)$) and is
graded using the Conley-Zehnder index. The differential $\partial$
counts pseudo-holomorphic curves in $(P,
d\theta)$ whose domains are disks with points removed on the
boundary. For more details we refer to \cite{LCHIPR}.

To make Legendrian contact homology well defined, we will in the following assume that $(P, d\theta)$ admits a proper smooth function $\psi \co P \to \R_{\ge 0}$ and a compatible almost complex structure $J_P$ such that $\psi$ is $J_P$-convex outside of some compact set $K$, i.e.~$-d(d\psi\circ J_P)(v,J_Pv)>0$ holds for $v\neq 0$ outside of $K$.

An {\em augmentation} $\ve$ of $\mathcal A(\Lambda)$ is a
differential graded algebra homomorphism from $(\mathcal A(\Lambda),
\partial)$ to  $(\Z_2, 0)$, i.e. $\varepsilon$ is a graded algebra map such that $\ve(1) = 1$ and $\ve\circ
\partial_{\Lambda} = 0$. Given an augmentation $\ve$, there is a way
due to Chekanov, see \cite{DAOLL}, to linearize $(\mathcal
A(\Lambda), \partial)$ to a finite dimensional differential graded
complex $CL_{\bullet}(\Lambda):=(A(\Lambda), \partial^{\ve})$. Here
$\partial^{\ve}$ is the linear part of $\widetilde{\partial}^{\ve}$
defined by $\widetilde{\partial}^{\ve}(c):=(\varphi^{\ve}\circ
\partial\circ (\varphi^{\ve})^{-1})(c)$, where $\varphi^{\ve}:
\mathcal A(\Lambda)\to \mathcal A(\Lambda)$ is a differential graded
algebdra isomorphism given by $\varphi^{\ve}(c): = c+\ve(c)$ on the
generators coming from $\mathcal R(\Lambda)$ and then extended as an
algebra map. We denote by $LCH_{\bullet}^{\ve}(\Lambda)$ the
homology of $CL_{\bullet}(\Lambda)$ and by
$LCH^{\bullet}_{\ve}(\Lambda)$ the homology of the dual complex
$CL^{\bullet}(\Lambda)$. Observe that not every Legendrian admits an
augmentation.

\begin{defn}
Let $\Lambda_{-}$ and $\Lambda_{+}$ be two Legendrian submanifolds of $P\times \R$.
We say that that $\Lambda_{-}$ is {\em exact Lagrangian cobordant to $\Lambda_{+}$} and write $\Lambda_{-}\prec^{ex}_{L}\Lambda_{+}$ if
there exists a smooth cobordism $(L; \Lambda_{-},
\Lambda_{+})$, and an exact Lagrangian embedding $L\hookrightarrow(\R\times P\times \R, d(e^t\alpha))$ satisfying
\begin{align*}
L|_{(-\infty, -T_{L}]\times P\times \R}& = (-\infty, -T_{L}]\times \Lambda_{-},\\
L|_{[T_{L},\infty)\times P\times \R}& = [T_{L},\infty)\times \Lambda_{+}
\end{align*}
for some $T_{L}\gg 0$ and where $L^{c}:=L|_{[-T_{L}-1,T_{L}+1]\times
P\times \R}$ is compact. We will in general not distinguish between
$L$ and $L^{c}$ and will use $L$ to denote both of them. If
$L_{\Lambda}$ is an exact Lagrangian filling of $\Lambda$ in the
symplectization of $P\times \R$, i.e., $L_{\Lambda}$ is an exact
Lagrangian cobordism with empty $-\infty$-boundary and whose
$+\infty$-boundary is equal to $\Lambda$, then we say that $\Lambda$
is {\em exact Lagrangian fillable} (or just {\em fillable}) and
write $\emptyset\prec^{ex}_{L_{\Lambda}}\Lambda$. In addition, if
there exists a spin exact Lagrangian filling of $\Lambda$, then we
say that $\Lambda$ is {\em spin fillable}. Finally, if
$\Lambda_{1}\prec^{ex}_{L} \Lambda_{2}$ and
$\Lambda_{2}\prec^{ex}_{L'} \Lambda_{3}$, then we denote by $L\ast
L'$ the exact Lagrangian cobordism obtained by gluing the positive
end of $L$ to the negative end of $L'$ so that
$\Lambda_{1}\prec^{ex}_{L\ast L'}\Lambda_{3}$.
\end{defn}

From now on we will assume that all exact Lagrangian cobordisms in the symplectization of $P\times \R$ are connected.

Ekholm in \cite{RSFTLLCHALFC} has shown that an exact Lagrangian
filling of a Legendrian submanifold of $P\times \R$ induces an
augmentation of its Legendrian contact homology algebra. More generally,
we should mention that given an exact Lagrangian cobordism $L$ with
$\Lambda_{-}\prec^{ex}_{L} \Lambda_{+}$ there is a differential
graded algebra morphism $\phi_L: \mathcal A(\Lambda_{+})\to \mathcal
A(\Lambda_{-})$ which is defined by a count of rigid punctured
pseudo-holomorphic disks in the symplectization of $P\times \R$ with boundary on $L$.

\begin{defn}
Given an exact Lagrangian cobordism $L$ from a
Legendrian submanifold $\Lambda\subset P\times \R$ to $\Lambda$, we
say that $L$ is an exact Lagrangian {\em endocobordism of
$\Lambda$}.
\end{defn}

\subsection{Results}
Rigidity of Lagrangian submanifolds has been discovered in many
situations. We refer to \cite{HOLSICB}, \cite{TSGOCBFACV} for the
case of Lagrangian submanifolds of a cotangent bundle and to
\cite{LCI}, \cite{ELCAP}, \cite{AFFLCTTFC} for the case of certain
Lagrangian cobordisms (observe that the notion of Lagrangian
cobordism in \cite{LCI}, \cite{ELCAP} and \cite{AFFLCTTFC} is
different from the one discussed here).

In this paper, we study exact Lagrangian endocobordisms. The results obtained here can be related to the results obtained in \cite{OANCEA} for subcritical fillings of contact manifolds. The main tool that will be used is the version of Seidel's isomorphism established in Theorem \ref{thm:isom}.

\subsubsection{Seidel's isomorphism for exact Lagrangian fillings}
There is an isomorphism described by Ekholm in~\cite{RSFTLLCHALFC}
which comes from a certain observation due to Seidel concerning the
wrapped Floer homology~\cite{AOSAOVF}, \cite{TSGOCBFACV}. The
following form of the isomorphism was proven by the first author
in~\cite{LPPTTSOPTRAA}.
\begin{thm}[Seidel]\label{malgthinstregbam}
Let $\Lambda$ be a Legendrian submanifold of $P\times \R$ and
$\emptyset\prec^{ex}_{L_{\Lambda}}\Lambda$. Then
\begin{align*}
H_{n-i}(L_{\Lambda}; \Z_{2})\simeq LCH_{\ve}^{i}(\Lambda; \Z_{2}).
\end{align*}
Here $\ve$ is the augmentation induced by $L_{\Lambda}$.
\end{thm}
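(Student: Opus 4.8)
The plan is to deduce this statement from the already-known wrapped Floer homology computation, using two pieces of input: (1) Seidel's isomorphism relating the wrapped Floer homology of $L_\Lambda$ (viewed as a Lagrangian in the completed symplectization) to the singular homology of $L_\Lambda$, and (2) the long exact sequence, due to Ekholm, relating the wrapped Floer homology of an exact Lagrangian filling to the linearized Legendrian contact cohomology of its Legendrian end. First I would set up wrapped Floer homology $HW_\bullet(L_\Lambda)$ for the filling $L_\Lambda$ inside the symplectization $\R\times P\times\R$, completed appropriately at the positive end so that the Reeb flow on $\Lambda$ provides the wrapping Hamiltonian; here the auxiliary function $\psi$ and the $J_P$-convexity hypothesis on $P$ are what guarantee the necessary compactness (no escape to infinity in the $P$-direction) for the Floer theory to be well-defined.

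The first key step is Seidel's observation: for an exact Lagrangian filling that is displaceable-at-infinity in the appropriate sense, the wrapped Floer homology is isomorphic to the singular homology of the filling, $HW_\bullet(L_\Lambda)\cong H_{\bullet}(L_\Lambda;\Z_2)$ — more precisely, $HW_\bullet(L_\Lambda)\cong H_{n-\bullet}(L_\Lambda,\partial L_\Lambda;\Z_2)$ or $H_{n-\bullet}(L_\Lambda;\Z_2)$ depending on grading conventions and on which version one uses. I would quote this from \cite{AOSAOVF}, \cite{TSGOCBFACV} in the cotangent-bundle-type setting and note that the same argument applies here since $L_\Lambda$ has no negative end. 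The second key step is Ekholm's identification, from \cite{RSFTLLCHALFC}, of the "high-action" or "Reeb chord" part of the wrapped Floer complex of $L_\Lambda$ with the linearized contact homology complex of $\Lambda$, with the augmentation $\ve$ being exactly the one induced by counting rigid disks on $L_\Lambda$. Since $L_\Lambda$ is a filling (empty negative end), there are no generators other than the Reeb chords of $\Lambda$, so this identification is in fact an isomorphism of complexes (up to a grading shift), giving $HW_\bullet(L_\Lambda)\cong LCH^{n-\bullet}_\ve(\Lambda;\Z_2)$, i.e. the wrapped Floer homology computes linearized Legendrian contact cohomology directly. The specific form proven in \cite{LPPTTSOPTRAA} that I would invoke is precisely the composition of these two isomorphisms.

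Combining the two isomorphisms gives
\begin{align*}
H_{n-i}(L_\Lambda;\Z_2)\cong HW_{?}(L_\Lambda)\cong LCH^{i}_\ve(\Lambda;\Z_2),
\end{align*}
and the only real work is bookkeeping to pin down the grading shifts so that the degree on the left is $n-i$ exactly when the degree on the right is $i$; this is where care is needed, since Conley--Zehnder indices, Maslov-type degree conventions for wrapped Floer homology, and the Poincar\'e-duality shift by $n=\dim\Lambda = \dim L_\Lambda - 1$ all interact. The main obstacle I anticipate is not any deep geometric input — both isomorphisms are on the shelf — but rather making the identifications compatible: one must check that the augmentation appearing in the linearized theory on the contact-homology side genuinely matches the one determined by the filling in the Floer-theoretic side, and that the analytic setup (compactness via $\psi$, transversality, orientations, which over $\Z_2$ is a non-issue) is uniform enough that the two constructions can be run on the same moduli spaces. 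Given the hypotheses in the excerpt, this is essentially a matter of citing \cite{RSFTLLCHALFC} and \cite{LPPTTSOPTRAA} and reconciling conventions.
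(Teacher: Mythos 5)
Your toolbox (wrapped Floer homology of the filling, Ekholm's identification of Reeb-chord generators with the linearized complex, Seidel's observation) is the right one, but the mechanism you propose is not, and the two ``on the shelf'' isomorphisms you want to compose are each false as stated in this setting. First, displaceability does not give $HW_\bullet(L_\Lambda)\simeq H_\bullet(L_\Lambda;\Z_2)$; it gives $HW_\bullet(L_\Lambda)=0$. Here the wrapping Hamiltonian is (a cut-off of) the Reeb translation $\partial_z$, whose flow displaces the perturbed copy of $L_\Lambda$ from $L_\Lambda$ entirely; this vanishing is exactly Proposition \ref{prop:inv} of the paper and is the whole content of Seidel's observation. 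Second, the wrapped complex is \emph{not} generated only by the Reeb chords of $\Lambda$: besides the intersection points created near the positive end by the wrapping (these are the ones corresponding to Reeb chords), there are the intersection points of $L_\Lambda$ with its $C^1$-small pushoff in the compact part, which correspond to critical points of a Morse function on $L_\Lambda$. These interior generators are indispensable; if the complex really had only Reeb-chord generators, acyclicity would force $LCH^\bullet_\ve(\Lambda;\Z_2)=0$. Under the only other reading of your Step 1 (namely that ``$HW$'' means the homology of the Reeb-chord part alone), the claim $HW\simeq H_{n-\bullet}(L_\Lambda)$ \emph{is} the theorem to be proved and cannot be quoted from \cite{AOSAOVF}, \cite{TSGOCBFACV}; so either way the argument is circular or contradicts the vanishing.

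The correct argument --- the one in \cite{LPPTTSOPTRAA} for this statement, and the one the paper runs in Section \ref{sec:wrapped} for the ring-coefficient version (Theorem \ref{thm:isom}) --- uses the action filtration rather than a chain of isomorphisms through a nonzero wrapped group. The Reeb-chord generators span a subcomplex $CF^\bullet_\infty$; the quotient complex is identified with the Morse complex of $L_\Lambda$ (via the local analysis of \cite{ADESFLCH}), so its homology is $H_{n-i}(L_\Lambda)$; and the total wrapped complex is then an acyclic mapping cone
\begin{align*}
CF(L_\Lambda,L_\Lambda^{\epsilon,M,N})=\mathrm{Cone}\bigl(\Phi\co C^{\mathrm{Morse}}_{n-i}(f)\to CF^{i+1}_\infty\bigr),
\end{align*}
so that acyclicity (the displacement argument above) forces the connecting map $\Phi$ to be a quasi-isomorphism; that connecting map \emph{is} Seidel's isomorphism. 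For Theorem \ref{malgthinstregbam} itself one needs in addition the identification, over $\Z_2$, of $(CF^\bullet_\infty,d)$ with the Chekanov linearization $CL^\bullet(\Lambda)$ with respect to the augmentation induced by $L_\Lambda$; this is nontrivial (it is carried out in \cite{RSFTLLCHALFC}, \cite{LPPTTSOPTRAA}, and deliberately avoided in the paper's general-coefficient version), not a matter of bookkeeping of gradings. So the missing idea is the exact-triangle step driven by vanishing of the full wrapped homology, together with the genuine matching of the high-action differential with the linearized LCH differential.
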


Note that  the homology and cohomology groups in
Theorem~\ref{malgthinstregbam} are defined over $\Z_2$. To prove the
main rigidity result,  we need to use a slightly different version
of Seidel's isomorphism which holds for homology and cohomology
groups defined over a general unital ring $R$. To that end, in Section \ref{sec:wrapped} we define a version $CF^i_\infty (L_\Lambda;R)$ of the wrapped Floer homology complex associated to an exact Lagrangian filling. This complex is freely generated over $R$ by the Reeb chords on $\Lambda$, and its homology will be denoted by $HF^i_\infty (L_\Lambda;R)$. It is expected that this complex is isomorphic to $LCH_{\ve}^{i-1}(\Lambda;R)$ for some suitable choices, but we do not investigate this question. However, we establish the following result.
\begin{thm}[Seidel]
\label{thm:isom} Let $L_{\Lambda}$ be an exact Lagrangian filling which is
spin. For some suitable choices in the construction of the wrapped Floer homology complex of $L_\Lambda$, and after taking the grading to be modulo the Maslov number of $L_\Lambda$, we have an isomorphism
\[H_{n-i}(L_\Lambda;R) \simeq HF^{i+1}_\infty (L_\Lambda;R)\]
of homologies. In the non-spin case the same holds with $R=\Z_2$. In particular, the singular homology of an exact Lagrangian filling of $\Lambda$ can be computed by a complex generated by the Reeb chords on $\Lambda$.
\end{thm}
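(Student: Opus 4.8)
The plan is to construct a chain-level model of wrapped Floer homology for the filling $L_\Lambda$ in which the generators are organized into two types — the Reeb chords of $\Lambda$ at the positive end, and the critical points of a Morse function on the compact part $L_\Lambda^c$ — and then show that the part of the differential that remains after a suitable degeneration computes singular homology. First I would recall the standard setup: take a Hamiltonian on the symplectization that is linear in $e^t$ at the positive end, so that its time-one chords correspond bijectively to Reeb chords of $\Lambda$ together with a small Morse perturbation near $L_\Lambda^c$ contributing the interior critical points. The resulting complex $CF^\bullet_\infty(L_\Lambda;R)$ is, by construction, freely generated over $R$ by $\mathcal R(\Lambda)$ (the interior critical points are dealt with separately), graded by a Conley–Zehnder-type index taken modulo the Maslov number of $L_\Lambda$; the shift $i\mapsto i+1$ and the reflection $n-i$ are bookkeeping coming from comparing the Floer grading conventions with the Morse grading on $L_\Lambda$.

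The key step is Seidel's observation, in the form Ekholm uses in \cite{RSFTLLCHALFC} and the first author in \cite{LPPTTSOPTRAA}: the wrapped complex of an exact Lagrangian filling is quasi-isomorphic to the Morse complex of $L_\Lambda^c$ with coefficients in $R$. The mechanism is a neck-stretching / action-filtration argument. One introduces a filtration on $CF^\bullet_\infty(L_\Lambda;R)$ by the action of the generators; the low-action piece is the Morse complex of $L_\Lambda^c$, the high-action piece is generated by the Reeb chords, and the connecting differential counts holomorphic strips with one end on a Reeb chord and one on a critical point. Seidel's insight is that, after the stretching, these strips are rigidly identified with (perturbations of) the holomorphic disks defining the augmentation $\ve$ and its dual, so that passing to the associated spectral sequence (or directly computing homology) collapses the Reeb-chord generators against each other and leaves exactly $H_\bullet(L_\Lambda^c;R)\cong H_\bullet(L_\Lambda;R)$. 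Equivalently: $HF^\bullet_\infty(L_\Lambda;R)$ is the wrapped Floer homology of $L_\Lambda$ as an object in a suitable wrapped Fukaya-type category, and for a filling this is known to be the singular cohomology of the Lagrangian — the argument of \cite{AOSAOVF,TSGOCBFACV} adapted to the symplectization, which is precisely what Theorem~\ref{malgthinstregbam} records over $\Z_2$.

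The main obstacle is orientations and signs over a general ring $R$: to make the counts of holomorphic curves well-defined with $R$-coefficients one must coherently orient all relevant moduli spaces, and this is exactly where the spin hypothesis on $L_\Lambda$ enters. I would invoke the standard machinery (capping operators, the spin structure on $L_\Lambda$ inducing coherent orientations of the disk moduli spaces, compatibility with gluing) to equip $CF^\bullet_\infty(L_\Lambda;R)$ with an $R$-linear differential whose square is zero, and to ensure the Seidel quasi-isomorphism above is sign-compatible. When $L_\Lambda$ is not spin one loses coherent orientations, so one works over $\Z_2$ and the argument goes through verbatim, giving the final sentence of the theorem. The rest — that the ranks over a field compute $\dim H_\bullet(L_\Lambda;\F)$, and the "in particular" clause — is then immediate from the universal-coefficient theorem applied to the $R$-chain complex so produced.
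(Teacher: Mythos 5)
Your overall architecture (two kinds of generators --- Reeb chords of $\Lambda$ and Morse critical points on the compact part --- separated by an action filtration, with spin entering through orientations) matches the paper, but the central mechanism is misstated, and as written the argument does not yield the theorem. You claim that the spectral sequence of the filtration ``collapses the Reeb-chord generators against each other'' and that the total wrapped complex computes $H_\bullet(L_\Lambda;R)$, equivalently that ``the wrapped Floer homology of a filling is known to be the singular cohomology of the Lagrangian.'' This is not the relevant statement, and in the present setting it is false: for a filling in the symplectization of a contactization $P\times\R$, the Reeb flow is translation by $\partial_z$, so $L_\Lambda$ can be displaced from its wrapped pushoff by a compactly supported Hamiltonian isotopy, and Seidel's actual observation is that the \emph{total} complex is acyclic (this is Proposition~\ref{prop:inv} in the paper). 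The isomorphism of Theorem~\ref{thm:isom} is then forced: the chord generators span a subcomplex $CF^\bullet_\infty$, the quotient is identified with the Morse complex of $L_\Lambda\cap\{t\le N-1\}$ (via a Weinstein neighborhood, the graph of $-df$, and the analytic comparison of rigid strips with flow lines as in \cite{ADESFLCH}, confined by monotonicity), and acyclicity of the resulting mapping cone $\mathrm{Cone}(\Phi)$ makes the connecting map $\Phi\co C^{\mathrm{Morse}}_{n-i}(f;R)\to CF^{i+1}_\infty$ a quasi-isomorphism --- which is also precisely where the degree shift $i+1$ comes from. In your version (total homology $=H_\bullet(L_\Lambda)$, chords cancelling among themselves) no relation between $HF^{\bullet}_\infty$ and $H_\bullet(L_\Lambda)$ follows at all; you would need the vanishing statement, which you never formulate. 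Note also that Theorem~\ref{malgthinstregbam} asserts $H_{n-i}(L_\Lambda;\Z_2)\simeq LCH^i_\ve(\Lambda;\Z_2)$, not that the full wrapped group is singular cohomology, so your ``equivalently'' step misreads the $\Z_2$ statement you are trying to generalize.

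A second, more technical gap concerns signs. You propose to ``invoke the standard machinery'' of coherent orientations for the curves in the symplectization, but the paper is explicit that signs have not been defined for the symplectization version of Legendrian contact homology/wrapped Floer homology; this is exactly why it builds the complex as the word-length--one part of the DGA of a Legendrian \emph{lift} of $L_\Lambda\cup L_\Lambda^{\epsilon,M,N}$ to the contactization of the symplectization, where the orientation scheme of \cite{OILCHAELI} applies, and why it needs the compactness statement (Lemma~\ref{lem:compactness}) to run \cite{LCHIPR} in this noncompact, concave-end setting. Without either that detour or an actual construction of coherent orientations in the symplectization, your $R$-linear differential and the sign-compatibility of the comparison map are unjustified.
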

A more general version of this result appeared in \cite[Section 11]{RITTER}. Even though the latter version is formulated for closed contact manifolds, it can be adapted to work in the current setting as well. Also, we refer to \cite{OTLCBLS} for a version of this isomorphism obtained using the theory of generating families instead of pseudo-holomorphic curves.

Since we are interested in studying exact Lagrangian cobordisms, the
following long exact sequence obtained using Seidel's isomorphism
is important. This long exact sequence first appeared in the
work of the second author \cite{ANOLCBLSOR}, where it was shown for
the case $R=\Z_2$.
\begin{cor}
\label{cor:les} Let $L_\Lambda$ be an exact Lagrangian filling of
$\Lambda$ and let $L$ be an exact Lagrangian cobordism which can be
concatenated with $L_\Lambda$. Assume both $L_\Lambda$ and
$L_\Lambda * L$ to be spin cobordisms. There
is a long exact sequence
\[ \hdots \to H_i(\Lambda;R) \to HF^{n+1-i}_\infty (L_\Lambda;R) \oplus H_i(L;R) \to HF^{n+1-i}_\infty (L_\Lambda*L;R) \to \hdots \]
In the non-spin case, the same long exact sequence exists with $R=\Z_2$. Moreover, if we take the grading in the wrapped Floer homology groups to be induced by the grading in singular homology via Seidel's isomorphism, then the grading in the above long exact sequence may be taken to be in the integers.
\end{cor}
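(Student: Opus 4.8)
The plan is to deduce Corollary~\ref{cor:les} from Theorem~\ref{thm:isom} by feeding Seidel's isomorphism into the standard Mayer--Vietoris-type long exact sequence that relates a cobordism, a filling, and their concatenation. First I would recall the geometric decomposition: $L_\Lambda * L$ is obtained by gluing $L_\Lambda$ to $L$ along a collar neighbourhood of $\Lambda$, so that $L_\Lambda \cap L = \Lambda \times (-\epsilon,\epsilon)$ deformation retracts onto $\Lambda$, while $L_\Lambda \cup L$ is homotopy equivalent to $L_\Lambda * L$. The classical Mayer--Vietoris sequence for this decomposition reads
\[ \hdots \to H_i(\Lambda;R) \to H_i(L_\Lambda;R) \oplus H_i(L;R) \to H_i(L_\Lambda * L;R) \to H_{i-1}(\Lambda;R) \to \hdots \]
where the first map is $(j_{L_\Lambda})_* \oplus (j_L)_*$ induced by the inclusions (with the usual sign convention) and the second is the difference of the inclusion-induced maps. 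This is pure algebraic topology and requires no Floer theory.

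Next I would apply Theorem~\ref{thm:isom} to the two fillings that appear, namely $L_\Lambda$ and $L_\Lambda * L$: since both are assumed spin (or one works with $R = \Z_2$), one has $H_i(L_\Lambda;R) \simeq HF^{n+1-i}_\infty(L_\Lambda;R)$ and $H_i(L_\Lambda * L;R) \simeq HF^{n+1-i}_\infty(L_\Lambda * L;R)$, where I have written $n+1-i$ in place of the $n-(n-i)+1 = i+1$ indexing of the theorem evaluated at the appropriate degree. Substituting these two isomorphisms into the Mayer--Vietoris sequence above, and leaving $H_i(\Lambda;R)$ and $H_i(L;R)$ untouched, yields precisely the stated long exact sequence. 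The point about gradings is handled by the last sentence of Theorem~\ref{thm:isom}: if one declares the grading on $HF^\bullet_\infty$ to be the one transported from singular homology via Seidel's isomorphism, then the degree shift is already built in consistently, so the sequence is $\Z$-graded rather than merely graded modulo the Maslov number; one only needs to check that the connecting homomorphism $H_i(L_\Lambda*L;R) \to H_{i-1}(\Lambda;R)$ of Mayer--Vietoris drops degree by one, which it does.

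The main technical point I expect to require care is \emph{naturality}: one must verify that Seidel's isomorphism from Theorem~\ref{thm:isom} is compatible with the inclusion $L_\Lambda \hookrightarrow L_\Lambda * L$, in the sense that the square relating $(j_{L_\Lambda})_* \co H_\bullet(L_\Lambda;R) \to H_\bullet(L_\Lambda*L;R)$ to the corresponding map $HF^\bullet_\infty(L_\Lambda;R) \to HF^\bullet_\infty(L_\Lambda*L;R)$ (induced by the cobordism map / continuation map on the wrapped complexes generated by Reeb chords) commutes up to the claimed sign. Strictly speaking, the clean way to package this is to observe that the mapping cone of the Floer-theoretic cobordism map is quasi-isomorphic to a complex computing $H_\bullet(L;R)$ together with $H_\bullet(\Lambda;R)$ — which is exactly the content that makes the algebraic long exact sequence of the cone coincide with Mayer--Vietoris after applying Theorem~\ref{thm:isom} termwise. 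Since the $\Z_2$-version of this corollary was already established in \cite{ANOLCBLSOR}, I would follow that argument verbatim, only replacing the appeals to the $\Z_2$ Seidel isomorphism (Theorem~\ref{malgthinstregbam}) by the $R$-coefficient version (Theorem~\ref{thm:isom}) and checking that spin-ness of both $L_\Lambda$ and $L_\Lambda * L$ is exactly what is needed to make the orientation/sign bookkeeping in that replacement go through.
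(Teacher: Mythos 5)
Your proposal is correct and is essentially the paper's own argument: take the Mayer--Vietoris sequence for $L_\Lambda * L \simeq L_\Lambda \cup L$ with $L_\Lambda \cap L \simeq \R\times\Lambda$ and substitute $H_i(L_\Lambda;R)\simeq HF^{n+1-i}_\infty(L_\Lambda;R)$ and $H_i(L_\Lambda * L;R)\simeq HF^{n+1-i}_\infty(L_\Lambda * L;R)$ from Theorem~\ref{thm:isom}, with the grading remark handled exactly as you say. The naturality/continuation-map discussion in your last paragraph is not actually needed, since the corollary only asserts the existence of a long exact sequence with these groups (exactness survives any termwise replacement by isomorphisms), not that the maps are the Floer-theoretic cobordism maps.
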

\begin{proof}
The long exact sequence is obtained from the Meyer-Vietoris long
exact sequence
\[ \hdots \to H_i(\Lambda;R) \to H_i(L_\Lambda;R) \oplus H_i(L;R) \to H_i(L_\Lambda*L;R) \to \hdots\]
with coefficients in $R$, after replacing the relevant terms using
the isomorphism in Theorem \ref{thm:isom}
\end{proof}

\subsubsection{Rigidity phenomena for endocobordisms}
We first prove the following rigidity result for fillable Legendrian submanifolds of $P\times \R$:
\begin{thm}\label{maintheoremsameranks}
Let $\Lambda$ be a spin fillable Legendrian submanifold of $P\times
\R$ and let $L$ be an exact Lagrangian endocobordism of $\Lambda$
inside the symplectization. In addition, assume that
$L_{\Lambda}\ast L$ is spin for any spin exact Lagrangian filling
$L_{\Lambda}$ of $\Lambda$. Then
\begin{itemize}
\item[(1)] $\dim H_{i}(L;\F) = \dim H_{i}(\Lambda;\F)$ for all i,
\item[(2)] the  map
\begin{align*}
(i^{-}_{*},i^{+}_{*}): H_{j}(\Lambda;\F)\to H_{j}(L;\F)\oplus
H_{j}(L;\F)
\end{align*}
is injective for all $j$.
\end{itemize}
Here $i^\pm$ is the inclusion of $\Lambda$ as the $\pm\infty$-boundary of $L$, and
$\F$ is an arbitrary field.
If $\F=\Z_2$, then the spin assumptions above can be omitted.
\end{thm}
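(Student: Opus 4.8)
The plan is to derive everything from the long exact sequence in Corollary~\ref{cor:les}, using the fact that an exact Lagrangian filling $L_\Lambda$ of $\Lambda$ exists by hypothesis, that $L$ is an \emph{endo}cobordism (so $L_\Lambda \ast L$ is again an exact Lagrangian filling of $\Lambda$, hence carries an induced augmentation), and that the relevant spin conditions are in force. First I would observe that since $L_\Lambda\ast L$ is an exact Lagrangian filling of $\Lambda$, Theorem~\ref{thm:isom} applies to it as well, so $HF^{n+1-i}_\infty(L_\Lambda\ast L;\F)\simeq H_i(L_\Lambda\ast L;\F)\simeq H_i(L_\Lambda\ast L';\F)$ — wait, more carefully: the point is that $HF^{n+1-i}_\infty(L_\Lambda;\F)\simeq H_i(L_\Lambda;\F)$ and $HF^{n+1-i}_\infty(L_\Lambda\ast L;\F)\simeq H_i(L_\Lambda\ast L;\F)$. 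Feeding these back into the long exact sequence of Corollary~\ref{cor:les}, the sequence becomes
\[ \hdots \to H_i(\Lambda;\F)\xrightarrow{\ (i^-_*,\,j_*)\ } H_i(L_\Lambda;\F)\oplus H_i(L;\F)\xrightarrow{\ k_*\ } H_i(L_\Lambda\ast L;\F)\to\hdots, \]
which is, after applying Seidel's isomorphism to both filling terms, nothing but the Mayer--Vietoris sequence for $L_\Lambda\ast L = L_\Lambda\cup_\Lambda L$. So far this is tautological; the new input is a dimension count.

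The key step is the following numerical argument. Write $a_i=\dim H_i(\Lambda;\F)$, $b_i=\dim H_i(L;\F)$, $f_i=\dim H_i(L_\Lambda;\F)$, and $g_i=\dim H_i(L_\Lambda\ast L;\F)$. Since $L_\Lambda$ is a filling of $\Lambda$ and $L_\Lambda\ast L$ is also a filling of $\Lambda$ (both are exact Lagrangian fillings of the \emph{same} Legendrian $\Lambda$), their wrapped complexes $CF^\bullet_\infty$ are both freely generated over $\F$ by the \emph{same} set $\mathcal R(\Lambda)$ of Reeb chords on $\Lambda$ — this is exactly the content of the last sentence of Theorem~\ref{thm:isom}. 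Hence the total dimension of each is bounded by $\#\mathcal R(\Lambda)$, and in particular $\sum_i f_i = \sum_i g_i$ is impossible to conclude directly; instead, the right move is: the underlying graded vector space $CF^\bullet_\infty(L_\Lambda\ast L;\F)$ and $CF^\bullet_\infty(L_\Lambda;\F)$ are \emph{literally the same} graded vector space $\bigoplus_{c\in\mathcal R(\Lambda)}\F\cdot c$, with the grading depending only on $\Lambda$ (Conley--Zehnder index) and not on the filling. Therefore $\sum_i \dim CF^i_\infty(L_\Lambda;\F) = \sum_i\dim CF^i_\infty(L_\Lambda\ast L;\F)$, so passing to homology and using $\dim H \le \dim C$ together with Euler-characteristic considerations we get $\sum_i f_i \ge \sum_i(\text{Euler char pieces})$; more cleanly, since homology of a complex has total dimension $\le$ that of the complex and these two complexes have the \emph{same} total dimension, while a priori nothing forces equality — so I must instead argue via the long exact sequence. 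From the Mayer--Vietoris/Seidel sequence, exactness gives the alternating-sum identity $\sum_i(-1)^i(a_i - f_i - b_i + g_i)=0$ and, more to the point, the rank–nullity bound
\[ \sum_i \big(f_i + b_i\big) \;\le\; \sum_i \big(a_i + g_i\big), \]
with equality iff the sequence splits into short exact sequences (equivalently all connecting maps vanish). Now apply this with the roles of $L_\Lambda$ and $L_\Lambda\ast L$ swapped — but note $L_\Lambda\ast L$ plays the role of a filling and one can concatenate further copies of $L$: running the same sequence for the filling $L_\Lambda\ast L$ and the cobordism $L$ (whose concatenation is $L_\Lambda\ast L\ast L$, again spin by the hypothesis applied inductively, and again a filling of $\Lambda$ so again with complex $\bigoplus_{\mathcal R(\Lambda)}\F$), one gets $g_i$ playing the role of $f_i$, and iterating, the total Betti number $\sum_i f_i$ of the filling can only weakly increase at each concatenation with $L$, yet it is bounded above by $\#\mathcal R(\Lambda)$; hence it stabilizes, forcing the inequality above to be an equality from some point on, and then a monotonicity/minimality argument pushes it back to the first step. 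This yields $\sum_i b_i = \sum_i a_i$, that the connecting maps in the Seidel–Mayer--Vietoris sequence all vanish, and hence the short exact sequences
\[ 0\to H_i(\Lambda;\F)\xrightarrow{(i^-_*,j_*)} H_i(L_\Lambda;\F)\oplus H_i(L;\F)\xrightarrow{} H_i(L_\Lambda\ast L;\F)\to 0. \]
Injectivity of $(i^-_*, j_*)$ is then immediate, and since $H_i(L_\Lambda;\F)\simeq H_i(L_\Lambda\ast L;\F)$ (equal total dimension in each degree, once stabilization holds) the short exact sequence forces $\dim H_i(L;\F) = \dim H_i(\Lambda;\F)$ for every $i$, which is part (1).

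For part (2), I would run the identical argument with $i^+$ in place of $i^-$: by symmetry of the construction (reversing the symplectization, $L$ is also an exact Lagrangian cobordism from $\Lambda$ to $\Lambda$ "from the other side", and $L\ast L_\Lambda'$ type concatenations can be formed with a filling attached at the positive end), the Mayer--Vietoris inclusion at the $+\infty$ boundary is also injective on $H_j(\Lambda;\F)$. Combining, the map $(i^-_*, i^+_*)$ into $H_j(L;\F)\oplus H_j(L;\F)$ has both components' common-source injectivity available; concretely, $i^-_*$ alone is already injective (from the short exact sequence above), which suffices to make $(i^-_*,i^+_*)$ injective a fortiori. Finally, when $\F=\Z_2$ the spin hypotheses are unnecessary because Theorems~\ref{thm:isom} and Corollary~\ref{cor:les} already hold unconditionally with $R=\Z_2$, so the entire argument goes through verbatim.

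The main obstacle I anticipate is the stabilization/monotonicity step: turning "the total Betti number of the filling is non-decreasing under $\ast L$ and bounded" into the equality $\sum f_i = \sum(a_i+g_i) - \sum b_i$ \emph{at the first stage} rather than merely asymptotically. One must check that the grading is consistent across all the concatenations (it is, being the $\Z$-grading induced from singular homology via Seidel's isomorphism, as noted in Corollary~\ref{cor:les}, or modulo the Maslov number if that is infinite it is genuinely $\Z$-graded), that $L_\Lambda\ast L$, $L_\Lambda\ast L\ast L$, etc.\ are all spin (which follows by iterating the hypothesis "$L_\Lambda\ast L$ is spin for any spin filling $L_\Lambda$"), and that the bound $\#\mathcal R(\Lambda)<\infty$ really caps the total dimension independently of how many copies of $L$ we glue on — this is where chord-genericity and compactness of $\Lambda$ enter. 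A cleaner alternative, which I would pursue if the iteration is delicate, is to use the same bounded-ness directly: $\sum_i f_i \le \#\mathcal R(\Lambda)$ and $\sum_i g_i\le \#\mathcal R(\Lambda)$ combined with the long exact sequence inequality $\sum b_i + \sum f_i \le \sum a_i + \sum g_i$ and its mirror $\sum b_i + \sum g_i\le \sum a_i + \sum f_i$ (obtained by gluing a filling to $L$ first, then $L_\Lambda$, or by reversing orientation) add to give $2\sum b_i \le 2\sum a_i$, and the reverse inequality $\sum a_i\le \sum b_i$ comes from injectivity considerations, pinning down equality and hence exactness splitting.
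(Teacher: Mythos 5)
Your proposal contains one half of the paper's mechanism: Seidel's isomorphism (Theorem \ref{thm:isom}) bounds the total homology of any spin exact Lagrangian filling of $\Lambda$ by the number of Reeb chords, and concatenating $L$ onto a spin filling produces another spin filling, so (with your notation $a_i,b_i,f_i,g_i$) if $b_{i_0}>a_{i_0}$ for some $i_0$, the Mayer--Vietoris inequality $g_{i_0}\ge f_{i_0}+b_{i_0}-a_{i_0}>f_{i_0}$ makes the $i_0$-th Betti number of fillings grow under concatenation, contradicting the Reeb-chord bound. But your execution of this has a gap you yourself flag and do not close: it is \emph{not} true that the total Betti number of a filling weakly increases under concatenation with $L$; the valid statement is degreewise and only in a degree where $b_{i_0}>a_{i_0}$, and the ``push it back to the first step'' problem is exactly what the paper resolves, by choosing a spin filling $L^{\mathrm{max}}_\Lambda$ maximizing $\dim H_{i_0}$ (possible because of the chord bound) and observing that $L^{\mathrm{max}}_\Lambda * L$ would beat it. More seriously, this whole line of argument can only ever give the one-sided bound $\dim H_i(L;\F)\le\dim H_i(\Lambda;\F)$. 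The paper obtains the reverse bound from a separate, purely topological lemma: the long exact sequence of the pair $(L,\partial L)$ together with Poincar\'e--Lefschetz duality gives $\dim H_i(L)+\dim H_{n-i}(L)\ge 2\dim H_i(\Lambda)$, hence $\sum_i\dim H_i(L)\ge\sum_i\dim H_i(\Lambda)$, and the two bounds together force equality in every degree. Your proposed substitutes for this ingredient are unjustified: the ``mirror'' inequality $\sum_i(b_i+g_i)\le\sum_i(a_i+f_i)$ has no Mayer--Vietoris decomposition producing it (``reversing orientation'' would require an exact Lagrangian cap of $\Lambda$, which need not exist for a fillable Legendrian), and ``$\sum_i a_i\le\sum_i b_i$ from injectivity considerations'' is circular, since injectivity is part of what is being proved and would in any case only give $a_i\le f_i+b_i$ or $a_i\le 2b_i$.

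Part (2) also has a genuine gap. From the (hoped-for) short exact sequence $0\to H_j(\Lambda;\F)\to H_j(L_\Lambda;\F)\oplus H_j(L;\F)\to H_j(L_\Lambda*L;\F)\to 0$ you may only conclude injectivity of the map into $H_j(L_\Lambda;\F)\oplus H_j(L;\F)$; the claim that $i^-_*$ alone is injective does not follow, so neither does ``$(i^-_*,i^+_*)$ is injective a fortiori.'' The paper argues differently: $L*L$ is again an endocobordism satisfying the hypotheses, so part (1) applies to both $L$ and $L*L$, giving $\dim H_j(L*L;\F)=\dim H_j(L;\F)=\dim H_j(\Lambda;\F)$; then exactness of the Mayer--Vietoris sequence for the decomposition $L*L=L\cup_{\R\times\Lambda}L$, whose first map is precisely $(i^-_*,i^+_*)$, forces $\dim\,(i^-_*,i^+_*)(H_j(\Lambda;\F))=\dim H_j(\Lambda;\F)$, i.e.\ injectivity. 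You would need to add both missing ingredients (the duality lemma and the $L*L$ doubling argument) to make your outline into a proof.
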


The main ingredient used in the proof is Seidel's isomorphism.

\begin{remark}
Observe that we do not make any assumptions on the Maslov class of $L$ nor on $\Lambda$ in Theorem~\ref{maintheoremsameranks}
\end{remark}

\begin{remark}
The self-linking number of $\Lambda$ with itself pushed slightly in the Reeb direction is called the Thurston-Bennequin invariant, which is a Legendrian isotopy invariant. In the case when $P=\C^n$, an orientable Lagrangian cobordism $L$ from $\Lambda_-$ to $\Lambda_+$ satisfies
\[\mathrm{tb}(\Lambda_+)-\mathrm{tb}(\Lambda_-)=(-1)^{\frac{1}{2}(n^2-3n)}\chi(L,\Lambda_+).\]
We refer to \cite{LCOLK} for the proof in the case when $n=1$. The
proof of Chantraine can be naturally extended to the case of
arbitrary $n$. When $\Lambda_-=\Lambda_+$ we immediately conclude
that $\chi(L,\Lambda_+)=0$ and, in the case when $P=\C$, this actually implies Theorem~\ref{maintheoremsameranks} in the case of orientable $L$. However, when
$P\neq \C$, Theorem~\ref{maintheoremsameranks} can be seen as a
strengthening of this fact.
\end{remark}
Moreover, when $\Lambda$ is a homology sphere, we prove the
following theorem:

\begin{thm}\label{homologyspheremodginag}
\begin{enumerate}
\item If $\Lambda$ is a $\mathbb{Z}_2$-homology sphere admitting an exact Lagrangian filling, any exact Lagrangian endocobordism $L$ of $\Lambda$ has the property that the maps in homology
\[i^\pm_* \co H_j(\Lambda;\mathbb{Z}_2) \to  H_j(L;\mathbb{Z}_2)\]
induced by the inclusions of the boundary are isomorphisms. In
particular $L$ is spin and has vanishing Maslov class.
\item If $\Lambda$ is a $\mathbb{Z}$-homology sphere admitting a spin exact Lagrangian filling $L_\Lambda$, any exact Lagrangian endocobordism $L$ of $\Lambda$ satisfies the property that the maps in homology
\[i^\pm_* \co H_j(\Lambda;\mathbb{Z}) \to  H_j(L;\mathbb{Z})\]
induced by the inclusions of the boundary are isomorphisms.
\end{enumerate}
\end{thm}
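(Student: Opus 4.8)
The plan is to deduce both statements from Theorem~\ref{maintheoremsameranks} by elementary algebraic topology of the compact manifold $L$, whose boundary $\partial L=\Lambda_-\sqcup\Lambda_+$ consists of two copies of $\Lambda$. I would prove (1) first and feed it into the proof of (2).

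\emph{Part (1).} Since we work with $\Z_2$ coefficients, the spin assumptions in Theorem~\ref{maintheoremsameranks} are not needed, and it gives $\dim H_i(L;\Z_2)=\dim H_i(\Lambda;\Z_2)$ for all $i$ --- so $L$ is a $\Z_2$-homology $n$-sphere --- together with injectivity of $(i^-_*,i^+_*)$ on $\Z_2$-homology. In degree $0$ the maps $i^\pm_*$ are isomorphisms because $L$ is connected, and in degrees $\ne 0,n$ both sides vanish; the only content is in degree $n$, where all groups are isomorphic to $\Z_2$. Here I would use that $\partial L$ bounds: in the long exact sequence of the pair $(L,\partial L)$ the connecting homomorphism carries the relative fundamental class to $[\partial L]=i^-_*[\Lambda_-]+i^+_*[\Lambda_+]\in H_n(L;\Z_2)$, which is therefore zero; so $i^-_*[\Lambda_-]=i^+_*[\Lambda_+]$, and by injectivity of $(i^-_*,i^+_*)$ this common class is nonzero, whence both $i^\pm_*$ are isomorphisms. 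Dualizing, $(i^\pm)^*$ are isomorphisms on $\Z_2$-cohomology; since the restriction of $TL$ to either boundary component is $T\Lambda\oplus\R$ (a trivial line bundle summand), we get $(i^\pm)^*w_j(TL)=w_j(T\Lambda)$, and as $\Lambda$ is an orientable $\Z_2$-homology $n$-sphere one has $w_1(T\Lambda)=w_2(T\Lambda)=0$; hence $w_1(TL)=w_2(TL)=0$ and $L$ is spin. Finally, for $n\ge 2$ the group $H_1(L;\Z)$ is torsion (its rational Betti number equals that of $\Lambda$, namely $0$) while $H_0(L;\Z)\cong\Z$, so $H^1(L;\Z)=0$ by universal coefficients and the Maslov class of $L$ vanishes; for $n=1$ this follows from the vanishing of the rotation number of a Lagrangian-fillable Legendrian knot.

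\emph{Part (2).} A $\Z$-homology sphere is in particular a $\Z_2$-homology sphere, so (1) applies and $L$ is spin. Since $H^1(\Lambda;\Z_2)=0$ (for $n\ge 2$), $\Lambda$ carries a unique spin structure, so the spin structures induced on $\Lambda$ by any spin filling $L_\Lambda$ and by $L$ agree, and $L_\Lambda\ast L$ is spin (for $n=1$ it is an orientable surface, hence spin). Thus Theorem~\ref{maintheoremsameranks} becomes available over an arbitrary field $\F$: $\dim H_i(L;\F)=\dim H_i(\Lambda;\F)=\dim H_i(S^n;\F)$ for all $\F$, together with injectivity of $(i^-_*,i^+_*)$ on homology with any field coefficients. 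Matching these Betti numbers in every characteristic forces $H_*(L;\Z)$ to be torsion-free and isomorphic to $H_*(S^n;\Z)$, so $L$ is a $\Z$-homology $n$-sphere. I would then rerun the degree-$n$ argument over $\Z$, using that $L$ is orientable since spin: $i^-_*[\Lambda_-]=i^+_*[\Lambda_+]=:m$, nonzero by injectivity; and Lefschetz duality gives $H_n(L,\partial L;\Z)\cong H^1(L;\Z)=\Hom(H_1(L;\Z),\Z)=0$, so the inclusion-induced map $H_n(\partial L;\Z)\to H_n(L;\Z)$ is surjective and $m=\pm 1$. Hence $i^\pm_*$ are isomorphisms on integral homology in every degree (for $n=1$, where $L$ is a cylinder, this is immediate).

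\emph{Main obstacle.} The conceptual content is entirely in Theorem~\ref{maintheoremsameranks}; what remains is bookkeeping, but two points require care. First, passing from ``matching Betti numbers and $(i^-_*,i^+_*)$ injective'' to ``each $i^\pm_*$ is an isomorphism'' genuinely uses the geometric fact that $[\partial L]$ is null-homologous in $L$ together with Lefschetz duality --- without these, $i^+_*$ could a priori be zero or multiplication by $2$. Second, and I expect this to be the subtlest point, in (2) the spin hypothesis of Theorem~\ref{maintheoremsameranks} that one needs in order to work over $\Z$ is itself \emph{supplied} by (1) (which gives $L$ spin) and by the uniqueness of spin structures on a $\Z$-homology sphere (which gives $L_\Lambda\ast L$ spin), so there is no circularity.
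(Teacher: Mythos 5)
Your argument is correct and has the same overall skeleton as the paper's proof (both parts are deduced from Theorem~\ref{maintheoremsameranks} by elementary topology of the compact cobordism), but several key steps are implemented differently. In part (1) the paper does not use the injectivity statement at all: it applies Lefschetz duality to the pairs $(L,i_\pm(\Lambda))$, getting $H_{n+1}(L,i_\pm(\Lambda);\Z_2)\simeq H_0(L,i_\mp(\Lambda);\Z_2)=0$, hence $i^\pm_*$ is injective in degree $n$ and surjective in degree $0$, and then equality of Betti numbers finishes; you instead combine injectivity of $(i^-_*,i^+_*)$ with the fact that $[\partial L]$ is null-homologous in $L$ --- both routes are fine. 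In part (2) the paper establishes the spin hypothesis for $L_\Lambda*L$ by showing (via part (1) and excision) that $L_\Lambda\hookrightarrow L_\Lambda*L$ is a $\Z_2$-homology equivalence and pulling back $w_2$; you instead use that $L$ itself is spin by part (1) and that the spin structure on (a collar of) $\Lambda$ is unique since $H^1(\Lambda;\Z_2)=0$, so spin structures glue --- again both work, and your version makes the role of part (1) slightly more transparent. Finally, for the passage from field coefficients to $\Z$, the paper argues that the mapping cone of $i^\pm$ is acyclic over every field and hence over $\Z$, whereas you first pin down $H_*(L;\Z)\cong H_*(S^n;\Z)$ from the Betti numbers in all characteristics and then redo the degree-$n$ step with Lefschetz duality; your route is more computational but equally valid.

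One small point to repair in part (1): your justification that $H_1(L;\Z)$ is torsion (``its rational Betti number equals that of $\Lambda$'') implicitly invokes Theorem~\ref{maintheoremsameranks} with $\F=\Q$, which is not available there, since part (1) assumes only the existence of \emph{some} exact Lagrangian filling, with no spin hypotheses. The conclusion still holds, and more cheaply: $\operatorname{rank} H_1(L;\Z)\le \dim_{\Z_2}H_1(L;\Z_2)=\dim_{\Z_2}H_1(\Lambda;\Z_2)=0$ for $n\ge 2$, so $H^1(L;\Z)=\Hom(H_1(L;\Z),\Z)=0$ and the Maslov class vanishes. (For $n=1$ your remark about the rotation number needs the extra observation that, $L$ being an orientable surface with $H_1(L;\Z_2)\cong\Z_2$, the boundary circle generates $H_1(L;\Z)\cong\Z$ up to odd index, so vanishing on it kills the class; this is a side remark in the paper as well.)
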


It is natural to ask whether every exact Lagrangian cobordism
described in Theorem~\ref{maintheoremsameranks} is diffeomorphic to
$\R\times \Lambda$. In Section~\ref{sectiongeomaugmented}, we
construct an example of an exact Lagrangian endocobordism of a fillable Legendrian
$T^{2}\subset \C^{2}\times \R$ which is not diffeomorphic to
$\R\times T^{2}$.

\begin{remark}
Having an exact Lagrangian filling is known to have strong implications for the Legendrian submanifold. One important reason is that these fillings give rise to augmentations, which themselves have strong geometric implications (see e.g. \cite{ADESFLCH}). One could therefore expect that some analogue of
Theorem~\ref{maintheoremsameranks} holds in the more general case of
a Legendrian submanifold of $P\times \R$ whose Legendrian contact homology algebra admits an augmentaton. This
question is currently being studied by Chantraine, Ghiggini and the
authors, see \cite{FTFLC}.
\end{remark}

\subsubsection{Flexibility phenomena for endocobordisms}
\label{sec:flex}

Murphy in \cite{LLEIHDCM} has proven  an h-principle type result for
a class of Legendrian embeddings in contact manifolds of dimension
at least 5. These Legendrian submanifolds are called loose, see
Section~\ref{looselegsect} for the definition. In addition,
Eliashberg and Murphy in \cite{LC} have established an h-principle
for Lagrangian cobordisms with loose negative ends. We apply the result
of Eliashberg and Murphy and, in contrast to
Theorem~\ref{maintheoremsameranks}, get the following result for a
closed, connected genus $g$ loose Legendrian surface in
$\C^{2}\times\R$:
\begin{prop}\label{anycoblooselegsphere}
For any closed, orientable, connected $3$-manifold $M$ and any closed, connected loose Legendrian surface $\Sigma_g\subset \C^2\times\R$ of genus $g$, there is an
exact Lagrangian endocobordism $L$ of $\Sigma_g$ in the
symplectization of $\C^2 \times \R$ which is diffeomorphic to $M \#
(\R\times \Sigma_g)$.
\end{prop}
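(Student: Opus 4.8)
The plan is to build the endocobordism by applying the Eliashberg--Murphy h-principle \cite{LC} for exact Lagrangian cobordisms with loose negative end, starting from a suitable smooth model for the cobordism. First I would fix the smooth topological input: given a closed, orientable, connected $3$-manifold $M$, remove an open ball to obtain a compact manifold $M_0$ with $\partial M_0 = S^3$, and form the smooth cobordism $W := (([-1,1]\times \Sigma_g) \, \#_{\mathrm{int}} \, M_0)$ from $\Sigma_g$ to $\Sigma_g$, where the connect sum is performed in the interior; this is a smooth cobordism from $\Sigma_g$ to itself that is diffeomorphic to $M \# (\R\times\Sigma_g)$ after attaching the cylindrical ends. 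The key point is that this $W$ carries a formal (i.e. homotopy-theoretic) solution to the exact Lagrangian cobordism problem: since $\Sigma_g$ is loose in $\C^2\times\R$ and the ambient symplectization is $\C^2\times\R\times\R \cong \C^3$ (up to the relevant exact symplectomorphism), the obstruction to finding a Lagrangian immersion of $W$ into the symplectization extending the cylindrical ends lies in bundle-theoretic data over $W$, which is unobstructed because $T\C^3$ is trivial and $W$ has the homotopy type of a $3$-complex with controlled normal data; one checks that the formal Lagrangian data on the two cylindrical ends $\R\times\Sigma_g$ extend over $W$.

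Next I would invoke \cite{LC} directly: because the negative end of the cobordism is the loose Legendrian $\Sigma_g$, the h-principle of Eliashberg--Murphy asserts that any formal exact Lagrangian cobordism (with loose negative Legendrian end) is homotopic, through formal exact Lagrangian cobordisms, to a genuine exact Lagrangian cobordism, with the smooth type of the cobordism unchanged. Applying this to the formal solution built on $W$ produces an honest exact Lagrangian embedding $L \hookrightarrow \R\times\C^2\times\R$ with $\Sigma_g \prec^{ex}_L \Sigma_g$ and $L$ diffeomorphic to $W$ with cylindrical ends, i.e. diffeomorphic to $M \# (\R\times\Sigma_g)$. Here one must also be slightly careful that the positive end is again $\Sigma_g$ as a Legendrian and not merely as a smooth submanifold; this is arranged by choosing the formal data on the positive cylindrical end to be the genuine cylinder over $\Sigma_g$, which the h-principle preserves (the cobordisms produced are cylindrical near both ends). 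Connectedness of $L$ follows from connectedness of $M$ and of $\Sigma_g$.

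The main obstacle — or rather the main thing to verify carefully — is the construction and extension of the formal data, i.e. checking that the smooth cobordism $W$ together with the fixed Lagrangian structures on its two cylindrical ends really does admit a formal exact Lagrangian cobordism structure. Concretely one needs: (i) a Lagrangian immersion $W \looparrowright \R\times\C^2\times\R$ extending the cylindrical ends, which by the Gromov--Lees h-principle for Lagrangian immersions reduces to a bundle-monomorphism $TW \oplus \underline{\R}^k \to \underline{\C^3}$ whose image is Lagrangian, and (ii) compatibility of the primitive of the symplectic form (exactness) near the ends, which is automatic since near the ends the immersion is the genuine cylinder. Since $\C^3$ has trivial tangent bundle and the Lagrangian Grassmannian $U(3)/O(3)$ is connected with $\pi_1 = \Z$, the obstructions to extending the Gauss map over a $3$-dimensional $W$ are a finite-dimensional homotopy computation; the connect-summed $M_0$ piece contributes nothing obstructive because it is being glued into the interior where we have full flexibility. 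Once the formal solution exists, \cite{LC} does the rest, and no pseudo-holomorphic curve analysis is needed for this proposition (in sharp contrast to Theorem~\ref{maintheoremsameranks}).
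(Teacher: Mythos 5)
Your overall strategy --- reduce everything to the Eliashberg--Murphy flexibility result for cobordisms with loose negative end --- is the same as the paper's, but the way you feed that result its input contains a genuine gap. The statement from \cite{LC} that is actually usable here (their Theorem 2.2, which is what the paper invokes) takes as input a self-transverse \emph{exact Lagrangian immersion} of the cobordism and removes its double points by a \emph{compactly supported} Hamiltonian regular homotopy. Since a compactly supported regular homotopy of a $3$-manifold immersed in a $6$-dimensional target creates and cancels transverse double points in pairs, such a deformation can end at an embedding only if the immersion one starts with has an even number of double points (and, in the even-dimensional case, vanishing total self-intersection index --- compare the proof of Proposition~\ref{bigspuntriviallooselegspheres}, where this is stated explicitly). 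So the h-principle is not the blanket assertion ``every formal exact Lagrangian cobordism with loose negative end is realized,'' and the entire content of the proposition is the construction of a concrete immersion with the double-point count under control. Your formal-data argument (``the $M_0$ piece is glued into the interior where we have full flexibility, so it contributes nothing obstructive'') asserts exactly this nontrivial step without proof: the relative obstruction carried by the closed $3$-manifold summand is precisely where the self-intersection data lives, and it is not dismissed by the triviality of $T\C^3$ and the connectivity of the Lagrangian Grassmannian alone.

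The paper handles this as follows. By Audin \cite{FNDEDDPDDLEP} and \cite{CELIWFDP}, every closed orientable connected $3$-manifold $M$ admits a self-transverse exact Lagrangian immersion into the symplectization of $\C^2\times\R$ with an odd number $2k-1$ of double points. One takes Legendrian lifts of this immersion and of the trivial cylinder $\R\times\Sigma_g$ in the contactization of the symplectization and forms their cusp connected sum as in \cite{NILSIR}; projecting back gives a self-transverse exact Lagrangian immersion of $M\#(\R\times\Sigma_g)$, cylindrical over $\Sigma_g$ at both ends, now with $2k$ double points, i.e.\ evenly many. Only at this point does Theorem 2.2 of \cite{LC} apply, using looseness of $\Sigma_g$, to produce a compactly supported isotopy ending at an embedded exact Lagrangian endocobordism $L$ diffeomorphic to $M\#(\R\times\Sigma_g)$. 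If you wish to keep a purely formal formulation, you would have to verify the relative bundle-theoretic extension honestly and show that it can be chosen so that a generic representative has cancellable double points; as written, your proposal misquotes the h-principle in a too-strong form and skips the step on which the proposition actually rests.
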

Similarly, we obtain the following result in higher dimensions.
\begin{prop}\label{bigspuntriviallooselegspheres}
For any loose Legendrian submanifold $\Lambda\subset \C^n \times \R$ and number $N>0$ there exists an exact
Lagrangian endocobordism $L$ of $\Lambda$ satisfying $\sum_i \dim
H_{i}(L;\F) \geq N$. Here $\F$ is an arbitrary field.
\end{prop}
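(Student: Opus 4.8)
The plan is to reduce to a Mayer–Vietoris / surgery argument combined with the Eliashberg–Murphy h-principle for Lagrangian cobordisms with loose negative end. The strategy is: start from the trivial cylinder $\R\times\Lambda$, remove a small disk from its interior, and glue in the complement of a disk in a closed manifold $W$ of large total Betti number — but do this in a way that is realized by an honest exact Lagrangian embedding in the symplectization. Concretely, one first notes that $\Lambda$ is loose, hence so is its image as the negative (and positive) end; moreover for the construction it suffices to produce a \emph{formal} exact Lagrangian endocobordism with the desired underlying smooth cobordism, since the h-principle then upgrades it to a genuine one whenever the negative end is loose.

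The key steps, in order, would be the following. First, I would fix $W$ to be any closed smooth $(n+1)$-manifold admitting an immersion (or, after stabilizing, an embedding) into $\R\times\C^n\times\R$ with trivial normal bundle and with $\sum_i \dim H_i(W;\F)\geq N+2$; a connected sum of sufficiently many copies of $S^k\times S^{n+1-k}$ works and one can make its total Betti number as large as desired. Second, I would form the smooth cobordism $L := (\R\times\Lambda)\,\#\,W$, taking the connected sum in the interior of the cylinder; since $\Lambda$ is connected and $\R\times\Lambda$ is connected, this is again a cobordism from $\Lambda$ to $\Lambda$, and by Mayer–Vietoris $\sum_i \dim H_i(L;\F)\geq \sum_i\dim H_i(W;\F) - C$ for a small universal constant $C$ (coming from the two copies of a sphere glued along the connect-sum neck), so choosing $N$ large in the choice of $W$ gives $\sum_i\dim H_i(L;\F)\geq N$. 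Third — and this is where the h-principle enters — I would observe that $L$ carries an obvious formal exact Lagrangian structure agreeing with the cylinder near $\pm\infty$: away from the surgery region it is the genuine Lagrangian cylinder, and near the surgery region one only needs a formal (bundle-theoretic) extension of the Lagrangian tangent-plane field over $W$ minus a disk, which exists because the relevant obstructions vanish after the stabilization built into the choice of $W$ (this is exactly the input the Eliashberg–Murphy theorem is designed to absorb). Fourth, since the negative end of $L$ is the loose Legendrian $\Lambda\subset\C^n\times\R$, the Eliashberg–Murphy h-principle \cite{LC} applies and produces a genuine exact Lagrangian endocobordism of $\Lambda$ that is diffeomorphic to $L$, completing the proof.

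The main obstacle I expect is step three: checking that the formal data needed to invoke \cite{LC} actually exists for the cobordism $(\R\times\Lambda)\#W$. The Eliashberg–Murphy theorem takes as input a formal Lagrangian cobordism — a cobordism together with a homotopy of its (stabilized) tangent bundle to a Lagrangian subbundle compatible with the ends — so one must verify that the tangent bundle of $W$ minus a disk admits such a Lagrangian structure rel boundary, where the boundary condition is the one coming from the Lagrangian cylinder. This is a purely obstruction-theoretic question; the cleanest way around it is to build $W$ itself as (a stabilization of) something that manifestly immerses Lagrangianly into $\C^{n+1}$, e.g.\ by taking products and connected sums of standard Lagrangian immersions and using Gromov's/Lees' h-principle for Lagrangian immersions to kill the obstruction, and then to arrange the total Betti number to be large within that class. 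Once $W$ is chosen with this property baked in, the formal extension over the connect-sum neck is routine, and the rest is a direct application of the quoted h-principle together with Mayer–Vietoris. One should also remark that, unlike Proposition \ref{anycoblooselegsphere}, here we do not insist on realizing a \emph{prescribed} diffeomorphism type, only on making the homology large, which is precisely what buys us the freedom to choose $W$ conveniently.
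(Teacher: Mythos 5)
Your overall strategy --- enlarge the homology of the trivial cylinder by surgery and then invoke the Eliashberg--Murphy h-principle at the loose negative end --- is the same as the paper's, but the step you yourself flag as the main obstacle is where the argument genuinely breaks, and it is not ``routine obstruction theory''. The result of \cite{LC} that is used here (Theorem 2.2 there) is not a statement of the form ``formal exact Lagrangian cobordism with loose negative end $\Rightarrow$ embedded exact Lagrangian cobordism'': its input is an honest self-transverse \emph{immersed} exact Lagrangian cobordism, cylindrical at the ends, and its hypotheses require the number of double points to be even and, when $n+1=2\ell$, the total relative self-intersection index to vanish. These quantities are regular-homotopy (hence formal) invariants rel the ends, so they cannot be ``absorbed by stabilization''; they must be arranged by hand. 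Moreover there is a concrete necessary condition showing that an arbitrary $W$ of large total Betti number will not do: for an orientable exact Lagrangian endocobordism in the symplectization of $\C^n\times\R$ the Thurston--Bennequin relation recalled after Theorem~\ref{maintheoremsameranks} forces $\chi(L,\Lambda)=0$. If $n+1$ is even and $W=\#^N(S^k\times S^{n+1-k})$ as you suggest (with $k$ and $n+1-k$ necessarily of the same parity), then $\chi\bigl((\R\times\Lambda)\#W\bigr)=\pm 2N\neq 0$, so the smooth cobordism you propose cannot be realized by an orientable exact Lagrangian endocobordism at all, independently of how one tries to apply \cite{LC}.

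The paper's proof is exactly the bookkeeping your sketch omits: starting from $\R\times\Lambda$ it performs $N$ Legendrian ambient $0$-surgeries (cusp connected sums), each creating one transverse double point of grading $n$ and, for $n+1=2\ell$, self-intersection index $\sigma$, and then $N$ Legendrian ambient $1$-surgeries on contractible curves (the isotropic surgery $2$-discs being produced by an explicit general-position argument), each creating one double point of grading $n-1$ and index $-\sigma$. This yields a genuine immersed exact Lagrangian endocobordism, built via \cite{LASALCH} rather than via formal data, with $2N$ double points of total index zero and with $\dim H_1\ge N$; the theorem of \cite{LC} then produces an embedded $L$ regular homotopic, hence diffeomorphic, to it. To repair your version you would need (i) to replace the formal-data step by the construction of an actual exact Lagrangian immersion cylindrical at the ends (ambient surgery, or a relative Gromov--Lees argument), and (ii) to choose the topology so that the double-point parity and index constraints --- in even dimensions, equivalently $\chi(L,\Lambda)=0$ --- hold, e.g.\ by pairing handles of two consecutive indices exactly as the paper does; at that point your argument essentially becomes the paper's.
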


\subsubsection{Looseness and the front spinning construction}
The second author defined the front $S^m$-spinning construction in
\cite{ANOTFSC} which, given a Legendrian embedding $\Lambda \subset \C^n\times \R$,
produces a Legendrian embedding $\Sigma_{S^m}\Lambda \subset \C^{n+m} \times \R$ of
the manifold $\Lambda \times S^m$. This generalises the construction described by Ekholm, Etnyre and
Sullivan in \cite{NILSIR}, which covers the case $m=1$. It was also shown that the spinning construction
extends to exact Lagrangian cobordisms. More precisely, given an exact Lagrangian cobordism $L$ from $\Lambda_-$ to $\Lambda_+$,
front spinning can be used to produce an exact Lagrangian cobordism $\Sigma_{S^m}L$ from $\Sigma_{S^m}\Lambda_-$ to $\Sigma_{S^m}\Lambda_+$ which is diffeomorphic to $L \times S^m$.

One can obviously apply the front spinning construction to the
examples in Section~\ref{sec:flex} to produce further examples.
However, not surprisingly, it can be shown that the front
$S^{m}$-spinning construction preserves looseness, see Appendix~\ref{fappthfrspconstrprlnss}.
\begin{prop}\label{spunlooselegisloose}
If $\Lambda$ is a loose Legendrian submanifold of $\C^{n}\times \R$, then $\Sigma_{S^m} \Lambda$
is a loose Legendrian submanifold of $\C^{n+m}\times \R$.
\end{prop}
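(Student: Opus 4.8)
The plan is to reduce the statement to the relative version of Murphy's looseness definition, working directly with the front projection. Recall that looseness of $\Lambda\subset\C^n\times\R$ means that there is a Darboux ball in which $\Lambda$ appears as a stabilized Legendrian arc times a point, more precisely that $\Lambda$ contains a so-called loose chart: a piece that, in suitable local coordinates, is the product of a Legendrian stabilization $\mathfrak{s}\subset\C\times\R$ of dimension $1$ (a zig-zag in the front with a sufficiently small $x$-width relative to the $z$-extent) with a Legendrian disk in $\C^{n-1}\times\R$, all contained in an appropriately small neighborhood. The key point is that this local model is \emph{stable under front spinning}: spinning introduces new coordinates $(x_{n+1},\dots,x_{n+m})$ on which $\Lambda$ is rotated around the subspace $\{x_1=\dots=x_n=0\}$, but it does not touch the $z$-coordinate nor the $(x_1,y_1)$-plane in which the zig-zag lives, away from the rotation axis.

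First I would recall from \cite{ANOTFSC} the precise formula for the front of $\Sigma_{S^m}\Lambda$ in terms of that of $\Lambda$: if the front of $\Lambda$ in $\R^{n+1}$ is $F$, then the front of $\Sigma_{S^m}\Lambda$ is obtained by taking the points of $F$ with $x_1\ge 0$ (after an isotopy making $\Lambda$ have this form near the spinning locus) and rotating the $x_1$-coordinate into an $\R^{m+1}$ worth of coordinates, i.e. replacing $x_1$ by $\|(x_1,x_{n+1},\dots,x_{n+m})\|$. Second, I would locate the loose chart of $\Lambda$ and arrange, via a Legendrian isotopy of $\Lambda$ (which does not affect looseness of the spun submanifold, since front spinning is functorial under Legendrian isotopy — also proved in \cite{ANOTFSC}), that the loose chart lies in a region where $x_1$ is bounded away from $0$, so the spinning rotation is a genuine local diffeomorphism there and simply carries the loose chart $\mathfrak{s}\times D^{n-1}$ of $\Lambda$ to $\mathfrak{s}\times D^{n-1}\times S^m$-like region of $\Sigma_{S^m}\Lambda$; more carefully, it carries it to $\mathfrak{s}$ times a Legendrian disk of the correct dimension $n+m-1$ inside the spun picture. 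Third, I would check that the defining inequality for a loose chart — the smallness of the $x$-width of the zig-zag compared to the size of the ambient neighborhood and the $z$-extent — is preserved, which is immediate since the relevant coordinates ($z$ and $x_1,y_1$) are untouched by the rotation, while the new coordinates only enlarge the available room.

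The main obstacle I expect is the bookkeeping near the spinning axis $\{x_1=0\}$: there the rotation is not a diffeomorphism, and one must make sure that the loose chart can genuinely be pushed off this axis by a Legendrian isotopy before spinning. This is where the hypothesis $n\ge 2$ (so that $\Lambda$ has dimension $\ge 2$ and there is room) and the precise Eliashberg–Murphy/Murphy conventions on the size of a loose chart enter; I would handle it by first isotoping $\Lambda$ so that a neighborhood of the loose chart is disjoint from a neighborhood of the set $\{x_1\le\epsilon\}$, using that the loose chart is a compact piece of a connected Legendrian and can be slid along $\Lambda$. Once the chart is in the region $\{x_1>\epsilon\}$, the rest is a direct verification that the image under the spinning map is again a loose chart, and hence $\Sigma_{S^m}\Lambda$ is loose in $\C^{n+m}\times\R$. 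The argument is deferred to Appendix~\ref{fappthfrspconstrprlnss}.
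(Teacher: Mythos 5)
Your proposal is correct and follows essentially the same route as the paper's appendix: work away from the spinning axis (the paper simply translates all of $\Lambda$ into $\{x_{n-1}>0\}$, as the spinning construction requires anyway), use the resulting product structure in which $\Sigma_{S^m}\Lambda$ sits as $0_{S^m}\times\Lambda$ in $T^*S^m\times J^1(\R^{n-1}\times\R_+)$, and take the product of the old loose chart with a suitable neighborhood in the new directions, the size condition $a<bc$ surviving because the new momentum directions are unbounded (the paper's remark that $T^*S^m$ has infinite symplectic area) after rescaling so that $b=1/2$. The ``direct verification'' you defer is exactly the paper's bookkeeping: correcting the product map by the conformal factor of the loose-chart contactomorphism on the $T^*S^m$-fibers and locating $B^m_{1/2}\times B^m_c\subset T^*D^m$ to exhibit the standard chart $(R^{n+m-1}_{a\frac{1}{2}c},\lambda_0\times D^{n+m-1}_{1/2})$.
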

In addition, it turns out that our proof of
Proposition~\ref{spunlooselegisloose} can be used to describe
certain forgetfulness properties of the front $S^{m}$-spinning
construction. More precisely, in Appendix~\ref{forgetfrontsmconstrex} we construct a few
examples of Legendrian submanifolds of $\C^n\times \R$ which are not
Legendrian isotopic but their front $S^{m}$-spuns become Legendrian
isotopic.

\section{Fillable Legendrians}\label{sectiongeomaugmented}

We  now prove Theorem~\ref{maintheoremsameranks}.
\subsection{Proof of Theorem~\ref{maintheoremsameranks}}
We start with the following relatively simple and purely topological
Lemma:
\begin{lem}\label{thomisomorphismmaininequality}
Let $L$ be a $(n+1)$-dimensional orientable endocobordism of $\Lambda$. Then
\begin{align}\label{ineqgoeintheformnat}
\sum\limits_{i} H_{i}(L;\mathbb F) \geq \sum\limits_{i}
H_{i}(\Lambda;\mathbb F).
\end{align}
Here $\mathbb F$ is an arbitrary field. Taking $\F=\Z_2$, the orientability assumption can be dropped.
\end{lem}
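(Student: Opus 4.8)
The plan is to exploit the fact that $L$ is a compact cobordism whose boundary is two copies of $\Lambda$. Since $L$ is an $(n+1)$-dimensional orientable manifold with boundary $\partial L = \Lambda \sqcup \Lambda$, Lefschetz--Poincaré duality gives $H_k(L, \partial L;\mathbb F) \cong H^{n+1-k}(L;\mathbb F)$, and over a field the latter has the same dimension as $H_{n+1-k}(L;\mathbb F)$. First I would write down the long exact sequence of the pair $(L,\partial L)$ with coefficients in $\mathbb F$:
\[
\cdots \to H_{k+1}(L,\partial L;\mathbb F) \xrightarrow{\partial} H_k(\partial L;\mathbb F) \xrightarrow{j_*} H_k(L;\mathbb F) \to H_k(L,\partial L;\mathbb F) \to \cdots
\]
Using $H_k(\partial L;\mathbb F) = H_k(\Lambda;\mathbb F) \oplus H_k(\Lambda;\mathbb F)$ and the duality identification above, exactness at $H_k(\partial L;\mathbb F)$ shows that the image of $j_* \co H_k(\partial L;\mathbb F)\to H_k(L;\mathbb F)$ has dimension at least $\dim H_k(\partial L;\mathbb F) - \dim H_{k+1}(L,\partial L;\mathbb F) = 2\dim H_k(\Lambda;\mathbb F) - \dim H^{n-k}(L;\mathbb F)$, hence $\dim H_k(L;\mathbb F) \ge 2\dim H_k(\Lambda;\mathbb F) - \dim H_{n-k}(L;\mathbb F)$.

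Next I would sum this inequality over all $k$. Since $L$ is connected and the second $\mathbb R$-factor direction gives a deformation retraction of $L|_{(-\infty,-T_L]}$ onto $\Lambda$, the restriction of $j_*$ to either boundary component is already interesting, but the cleanest route is to add the displayed inequality to the same inequality with $k$ replaced by $n-k$ (note $L$ has dimension $n+1$, so $H_{n-k}$ and $H_k$ appear symmetrically once we reindex): summing $\dim H_k(L) + \dim H_{n-k}(L) \ge 2\dim H_k(\Lambda) + 2\dim H_{n-k}(\Lambda) - \dim H_{n-k}(L) - \dim H_k(L)$ over all $k$ and rearranging yields $2\sum_k \dim H_k(L;\mathbb F) \ge 2\sum_k \dim H_k(\Lambda;\mathbb F) + (\text{nonneg.})$, which gives \eqref{ineqgoeintheformnat} after dividing by $2$. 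Alternatively, and perhaps more transparently, one observes that $j_* \co H_*(\Lambda;\mathbb F) \hookrightarrow H_*(L;\mathbb F)$ restricted to a single boundary copy is split injective whenever $\Lambda \hookrightarrow L$ admits a one-sided homological inverse; but since that splitting is exactly what part (2) of Theorem \ref{maintheoremsameranks} asserts (and is not available here), the safe argument is the dimension-counting one above, which only uses the long exact sequence and Poincaré--Lefschetz duality and therefore needs no splitting.

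The one subtlety — and the step I expect to require the most care — is making sure the bookkeeping of indices and the cancellation of the "error terms" $\dim H_{n-k}(L;\mathbb F)$ actually produces the clean inequality $\sum_i \dim H_i(L;\mathbb F) \ge \sum_i \dim H_i(\Lambda;\mathbb F)$ rather than something weaker like $\sum_i \dim H_i(L) \ge \sum_i \dim H_i(\Lambda) - \tfrac12\sum_i \dim H_i(L)$; the trick is that the terms $H_*(L,\partial L)$ appearing as "errors" are themselves, by duality, the $H_*(L)$ terms, so everything is self-referential and closes up. For the $\mathbb Z_2$ statement, Poincaré--Lefschetz duality holds for any compact manifold with boundary without an orientation hypothesis, so the identical argument goes through verbatim with $\mathbb F = \mathbb Z_2$, and the orientability assumption can be dropped.
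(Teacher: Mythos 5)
Your argument is correct and is essentially the paper's own proof: both use the long exact sequence of the pair $(L,\partial L)$, Poincaré--Lefschetz duality $H_{k+1}(L,\partial L;\F)\cong H^{n-k}(L;\F)$ to convert the error term, and then sum the resulting inequality $\dim H_k(L;\F)+\dim H_{n-k}(L;\F)\ge 2\dim H_k(\Lambda;\F)$ over all degrees and divide by two (your symmetrization in $k\leftrightarrow n-k$ is just a slightly more roundabout form of the same summation). The $\Z_2$ remark is handled identically in the paper.
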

\begin{proof}
We first write the long exact sequence of $(L,\partial L)$

\[\xymatrix{\hdots \ar[r]  &H_{i}(\partial L;\mathbb F) \ar[r] & H_{i}(L; \mathbb F)  \ar[r] & H_{i}(L,\partial L; \mathbb F) \ar[r] & H_{i-1}(\partial L; \mathbb F)\ar[r] & \hdots}.\]
Observe that from the exactness it follows that
\begin{align*}
\dim H_{i}(L;\mathbb F)\geq \dim H_{i}(\partial L;\mathbb F) - \dim
H_{i+1}(L, \partial L;\mathbb F)=2\dim H_{i}(\Lambda) - \dim
H_{i+1}(L,
\partial L;\mathbb F).
\end{align*}
We apply Poincar\'{e} duality and rewrite the last inequality as
\begin{align*}
\dim H_{i}(L;\mathbb F)\geq 2\dim H_{i}(\Lambda; \mathbb F) - \dim
H^{n-i}(L;\mathbb F),
\end{align*}
which is equivalent to
\begin{align}\label{basineqforitosumandgetrfa}
\dim H_{i}(L;\mathbb F) + \dim H_{n-i}(L;\mathbb F) \geq 2\dim
H_{i}(\Lambda;\mathbb F).
\end{align}
Summing Formula~\ref{basineqforitosumandgetrfa} over all $i$ and
then dividing the result by $2$ leads us to
\begin{align*}
\sum\limits_{i} H_{i}(L;\mathbb F) \geq \sum\limits_{i}
H_{i}(\Lambda;\mathbb F).
\end{align*}
\end{proof}

We now prove that $\dim H_{i}(L;\F)\leq\dim H_{i}(\Lambda; \F)$ for
all $i$. Assume that there exists $i_0$ such that
\begin{align}\label{oppini0inintrnewf}
\dim H_{i_0}(L;\F)>\dim H_{i_0}(\Lambda; \F).
\end{align}

We apply the argument of the first part of Proposition 1.2 from
\cite{ANOTFSC}. From the Mayer-Vietoris long exact sequence for
$L_{\Lambda}\ast L \simeq L_{\Lambda} \cup L$ with $L_{\Lambda}\cap
L \simeq \R\times \Lambda$
\[\xymatrix{\hdots \ar[r]  &H_i(\Lambda;\F) \ar[r] & H_i(L_{\Lambda};\F)\oplus H_i(L;\F)  \ar[r] & H_i(L_{\Lambda}\ast L;\F) \ar[r] & \hdots}\]
it follows that
\begin{align*}
\dim(H_{i_0}(L_{\Lambda}\ast L;\F)) &\geq
\dim(H_{i_0}(L_{\Lambda};\F)) + \dim(H_{i_0}(L;\F)) -
\dim(H_{i_0}(\Lambda; \F)).
\end{align*}
We now use Formula~\ref{oppini0inintrnewf} and get that
\begin{align}\label{topolniceasforiodifs}
\dim(H_{i_0}(L_{\Lambda}\ast L;\F)) > \dim(H_{i_0}(L_{\Lambda};\F)).
\end{align}

Denote by $\mathcal L_{\mathrm{sp}}(\Lambda)$ the set of all spin embedded
exact Lagrangian fillings $L_\Lambda$ of $\Lambda$ and
\begin{align*}
\mathcal H_{\mathrm{sp}}(\Lambda):=\{ (\dim(H_{i}(L_{\Lambda}; \F)))_{i} \in \Z_{\ge 0}^{n+1}:
L_{\Lambda}\in \mathcal L_{\mathrm{sp}}(\Lambda)\}.
\end{align*}
Since the existence of Legendrian isotopy implies the existence of
an exact Lagrangian cylinder, see \cite{LCOLK}, \cite{ANOLCBLSOR} or
\cite{LITFDA}, it follows that $\mathcal H_{\mathrm{sp}}(\Lambda)$ is a
Legendrian invariant.

Note that for $L_{\Lambda}\in \mathcal L_{\mathrm{sp}}(\Lambda)$,
Theorem~\ref{thm:isom} 
implies
that
\begin{align}\label{maininemaxorclspcobnce}
\sum_{i} \dim (H_{i}(L_{\Lambda}; \F)) \leq c,
\end{align}
where $c$ is a number of Reeb chords of $\Lambda$. Note that for the last formula we do not need any assumptions on the Maslov class of $L_{\Lambda}$ or
the Maslov number of $\Lambda$.

From the fact that $\Lambda$ admits a spin exact Lagrangian filling
and Formula \ref{maininemaxorclspcobnce} it follows that there
exists a spin exact Lagrangian filling $L^{\mathrm{max}}_{\Lambda}\in
\mathcal L_{\mathrm{sp}}(\Lambda)$ such that
\begin{align}\label{maxiforhomolgybothends}
\dim(H_{i_{0}}(L^{\mathrm{max}}_{\Lambda}; \F))\geq
\dim(H_{i_{0}}(L_{\Lambda}; \F))
\end{align}
for all $L_{\Lambda}\in \mathcal L_{\mathrm{sp}}(\Lambda)$. Then we define
$L^{\mathrm{sep}}_{\Lambda}:= L_\Lambda^{\mathrm{max}}*L$. Observe that from the spin assumption of
the theorem it follows that $L^{\mathrm{sep}}_{\Lambda}\in \mathcal
L_{\mathrm{sp}}(\Lambda)$. Hence, Formula~\ref{topolniceasforiodifs} implies
that
\begin{align*}
\dim(H_{i_{0}}(L^{\mathrm{sep}}_{\Lambda};\F))>\dim(H_{i_{0}}(L^{\mathrm{max}}_{\Lambda};
\F))
\end{align*}
which contradicts Formula~\ref{maxiforhomolgybothends}. Therefore,
we get that
\begin{align}\label{nfthowaandnpoknp}
\dim H_{i}(L;\F)\leq\dim H_{i}(\Lambda; \F)
\end{align}
for all $i$. We now combine Formulas~\ref{ineqgoeintheformnat} and
\ref{nfthowaandnpoknp} and get that $\dim H_{i}(\Lambda;\F)=\dim
H_{i}(L; \F)$ for all $i$. This finishes the proof of the first part
of Theorem~\ref{maintheoremsameranks}.

For the second part, we start by considering the two exact Lagrangian endocobordisms $L$ and $L\ast L$. From the Mayer-Vietoris
long exact sequence for $L\ast L \simeq L \cup L$ with $L\cap L
\simeq \R\times \Lambda$
\[\xymatrix{\hdots \ar[r]  &H_j(\Lambda;\F) \ar[r]^--{(i^{-}_{\ast}, i^{+}_{\ast})} & H_j(L;\F)\oplus H_j(L;\F)
\ar[r] & H_j(L\ast L;\F) \ar[r] & \hdots}\]
it follows that
\begin{align}\label{inequalityposneendinj}
\dim H_j(L\ast L;\F) \geq 2\dim H_j(L;\F) - \dim (i^{-}_{\ast},
i^{+}_{\ast})(H_j(L;\F)).
\end{align}
In addition, we apply the first part of
Theorem~\ref{maintheoremsameranks} and get that
\begin{align*} \dim
H_j(L;\F) = \dim H_j(\Lambda;\F) = \dim H_j(L\ast L;\F)
\end{align*}
for all $j$. Hence, in order for Formula~\ref{inequalityposneendinj}
to hold, $\dim (i^{-}_{\ast}, i^{+}_{\ast})(H_j(L;\F)) = \dim
H_{j}(L;\F)$ and therefore $(i^{-}_{\ast}, i^{+}_{\ast})$ is
injective. This finishes the proof of
Theorem~\ref{maintheoremsameranks}.

\begin{remark}
Note that the condition on an exact Lagrangian endocobordism $L$
that the concatenation with it acts on $\mathcal L_{\mathrm{sp}}(\Lambda)$ is
not too restrictive. At least, in the following situations $L$
satisfies this property:
\begin{itemize}
\item[(i)] $L$ is orientable and $3$-dimensional;
\item[(ii)] $L$ has a Morse function whose gradient points inwards at the $-\infty$-boundary and outwards at the $+\infty$-boundary, all whose critical points are of index at least three. For instance, this is the case when $L = L_1\ast \dots\ast L_k$, where $L_i$ is an exact
Lagrangian elementary cobordism which corresponds to a Legendrian ambient $m_i$-surgery (see \cite{LASALCH}) for
$m_i\geq 2$ and $i=1,\dots,k$.
\end{itemize}
\end{remark}

\subsection{Proof of Theorem~\ref{homologyspheremodginag}}

We first prove the first part of
Theorem~\ref{homologyspheremodginag}. Since $L$ is connected,
Poincar\'{e} duality implies that
\[H_{n+1}(L,i_\pm(\Lambda);\Z_2)\simeq H^{n+1}(L,i_\pm(\Lambda);\Z_2) \simeq H_0(L,i_\mp(\Lambda);\Z_2) \simeq 0.\]
It follows that $i^\pm_*$ are injections in degree $n$ and
surjections in degree 0. Theorem~\ref{maintheoremsameranks} implies
that $i^\pm_*$ must be isomorphisms in all degrees.

We continue with the proof of the second part. Since $\Lambda$, in particular, is a $\mathbb{Z}_2$-homology
sphere, part (1) applies which, together with excision, shows that
the inclusion $i$ of $L_\Lambda$ into the concatenation $L_\Lambda *
L$ induces an isomorphism
\[i_* \co H_j (L_\Lambda;\mathbb{Z}_2) \to H_j (L_\Lambda*L;\mathbb{Z}_2)\]
of homology groups with $\mathbb{Z}_2$-coefficients. In conclusion,
since $i^*$ pulls back the second Stiefel-Whitney class of
$L_\Lambda*L$ to the second Stiefel-Whitney class of $L_\Lambda$, it
follows that $L_\Lambda * L$ is spin as well.

Theorem~\ref{maintheoremsameranks} thus applies for coefficients in
any field, and an argument analogous to the proof of part (1)
implies that $i^\pm_*$ are isomorphisms for coefficients in any
field.

In other words, the mapping cone $\mathrm{Cone}(i^\pm)$ of the chain map of
the inclusions of either boundary component is acyclic with
coefficients in any field. It follows that it is acyclic with
$\mathbb{Z}$-coefficients as well and hence that $i^\pm_*$ are
isomorphisms with $\mathbb{Z}$-coefficients.

\subsection{Non-cylindrical cobordism}

\begin{figure}[t]
\begin{center}
\labellist
\pinlabel $\Lambda''$ at 245 330
\pinlabel $\Lambda'$ at 245 180
\pinlabel $D_1$ at 20 175
\pinlabel $D_2$ at 135 92
\pinlabel $\Lambda$ at 245 30
\pinlabel $\Lambda$ at 510 330
\endlabellist
\includegraphics[width=315pt]{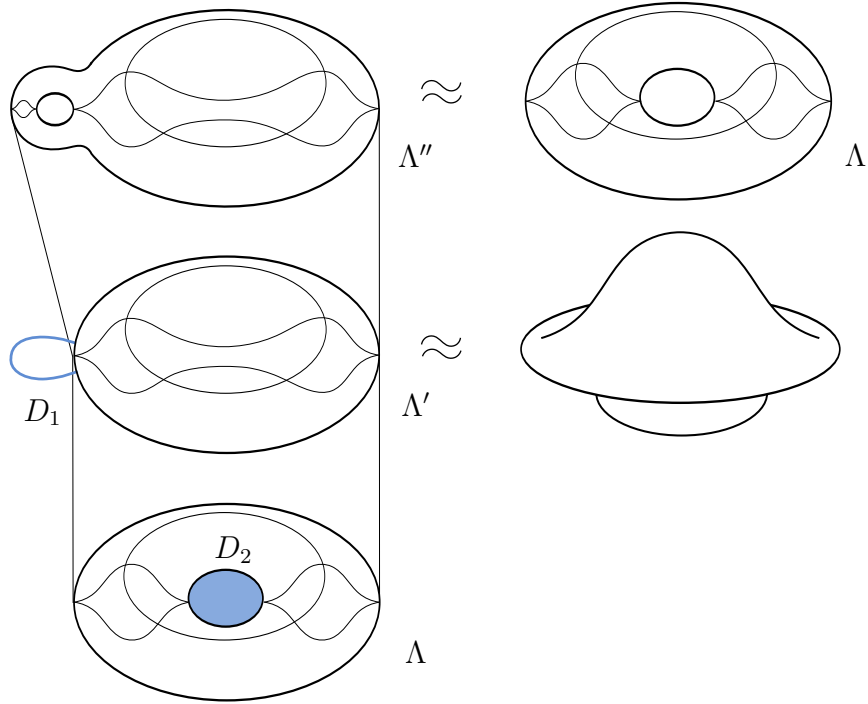}
\caption{The exact Lagrangiann cobordism not diffeomorphic to the cylinder.}\label{nontrcob}
\end{center}
\end{figure}

Let $\Lambda \simeq T^2$ be the Legendrian torus in $\C^{2}\times \R$ obtained as a result of the front spinning construction applied to the $tb=-1$ unknot $K$ in $\C\times \R$ (see Figure~\ref{nontrcob} for its front projection). From \cite{LLTKAT} (see also \cite{LCOLK}) it follows that $K$ is fillable and hence \cite[Proposition 1.5]{ANOLCBLSOR} implies that $\Lambda$ is fillable as well.

We construct the non-cylindrical exact Lagrangian endocobordism of $\Lambda$ using two Legendrian ambient surgeries.
This is a construction which provides a Legendrian embedding of the manifold obtained by surgery on a sphere inside a Legendrian submanifold and, moreover, produces an exact Lagrangian elementary handle-attachment cobordism from the former to the latter Legendrian submanifold. We refer to \cite{LASALCH} for more details.

The isotropic 2-disk $D_2$ with boundary on $\Lambda$  shown in Figure~\ref{nontrcob} is a so-called isotropic surgery disc (since the disc is Lagrangian, there are no choices of framings involved), and it thus determines a Legendrian ambient 1-surgery on $\partial D_ 2 \subset \Lambda$. The resulting Legendrian embedding $\Lambda' \subset \C^2 \times \R$ is diffeomorphic to the 2-sphere obtained by this 1-surgery, and its front-projection is shown in Figure~\ref{nontrcob}.

The isotropic 1-disk $D_1$ with boundary on $\Lambda'$ whose front projection is shown in Figure~\ref{nontrcob}, together with a framing of its symplectic normal bundle, also determines an isotropic surgery 1-disc. The choice of framing is unique if we require that the Legendrian submanifold produced by the corresponding Legendrian ambient surgery has vanishing Maslov class. The resulting Legendrian embedding is the torus $\Lambda''$, which can be seen to be Legendrian isotopic to $\Lambda$. (In this case the Legendrian ambient surgery corresponds to the cusp connected sum as described in \cite{NILSIR}, but performed on a single component).

Joining the exact Lagrangian elementary cobordisms produced by the above two consequtive Legendrian ambient surgeries, together with the exact Lagrangian concordance from $\Lambda''$ to $\Lambda$ induced by the isotopy (see \cite[Theorem 1.1]{LCOLK}), we have produced an exact Lagrangian endocobordism $L$ of $\Lambda$ which can be seen to be diffeomorphic to a solid torus with a neighborhood of a contractible curve removed. In particular, this cobordism is not diffeomorphic to $\R\times T^{2}$. Observe that $L * L$ is diffeomorphic, and even Hamiltonian isotopic, to $L$.

\section{Loose Legendrians}\label{looselegsect}
The theory of loose Legendrian knots in $\C^{n}\times \R$ for $n>1$  and the h-principle for Lagrangian caps with loose Legendrian ends has been recently discovered by Murphy \cite{LLEIHDCM} and then developed by Eliashberg and Murphy \cite{LC}.
\begin{defn}
Let $\Lambda$ be a Legendrian submanifold of $\C^{n}\times \R$. We say that $\Lambda$ is loose if and only if there exists a neighborhood $U$ of $\Lambda$ contactomorphic to a so-called standard loose chart $(R^{n-1}_{abc}, \Lambda_{0})$ with $a<bc$. Here
\begin{align*}
R^{n-1}_{abc} =& \{ (x,y, x_{1},\dots, y_{n-1}, z) : |x|,|y|\leq 1, \|(x_{1},\dots,x_{n-1})\|\leq b,\\ &\|(y_{1},\dots,y_{n-1})\|\leq c, |z|\leq a  )\}\subset \left(\C^{n}\times \R, dz-ydx-\sum_{i}y_{i}dx_{i}\right)
\end{align*}
and $\Lambda_{0}$ is the Legendrian solid cylinder, which is the product of
\begin{align*}
D^{n-1}_{b} = \{ (x_{1},y_{1},\dots,x_{n-1},y_{n-1}) : y_{1} = \dots = y_{n-1} = 0, \|(x_{1},\dots,x_{n-1})\|\leq b \}
\end{align*}
and a Legendrian curve $\lambda_{0}
\subset (\R^{3}, (x,y,z))$ whose front projection is shown in Figure~\ref{looseproj}. The slopes at the self-intersection point of the front are $\pm 1$ and the slope is everywhere in the interval $[-1,1]$, so that the Legendrian arc $\lambda_{0}$ is contained in the box
\begin{align*}
Q_{a}=\{|x|\leq 1, |y|\leq 1, |z|\leq a\}
\end{align*}
with $\partial \lambda_{0}\subset \partial Q_{a}$.
\end{defn}

\begin{figure}[t]
\begin{center}
\labellist
\pinlabel $z$ at 107 120
\pinlabel $a$ at 109 103
\pinlabel $-a$ at 109 8
\pinlabel $x$ at 205 62
\pinlabel $1$ at 187 62
\pinlabel $-1$ at 11 62
\endlabellist
\includegraphics[width=285pt]{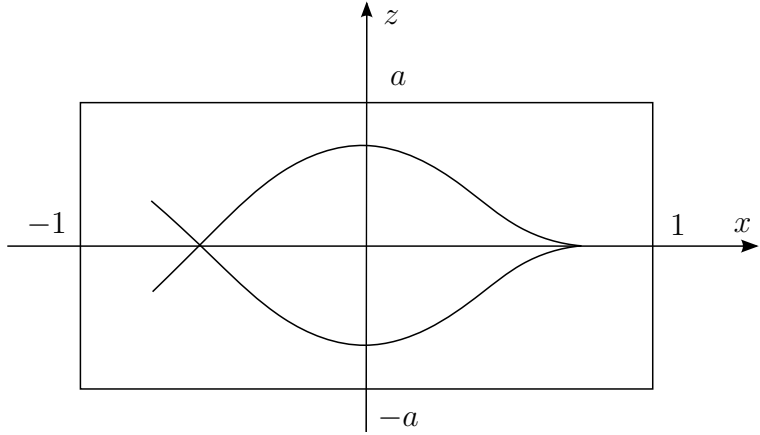}
\caption{The front projection of $\lambda_{0}$.}\label{looseproj}
\end{center}
\end{figure}

We now prove Proposition~\ref{anycoblooselegsphere}.

\subsection{Proof of Proposition~\ref{anycoblooselegsphere}}
Let $\Sigma_g$ be a closed, connected genus $g$
loose Legendrian surface. Consider a trivial exact Lagrangian
cobordism $\R\times \Sigma_g$. Given a closed, orientable, connected
$3$-manifold $M$, from the discussion in \cite{FNDEDDPDDLEP}, see
also \cite{CELIWFDP}, it follows that there exists a self-transverse
exact Lagrangian immersion of $M$ to the symplectization of
$\C^2\times \R$ with $2k-1$ double points for some $k>0$. We take
Legendrian lifts of $M$ and $\R\times \Sigma_g$ in the
contactization of the symplectization $\R\times \C^2\times \R$ and
denote them by $M_{\mathrm{leg}}$ and $(\R\times \Sigma_g)_{\mathrm{leg}}$,
respectively. We now take a cusp connected sum $M_{\mathrm{leg}}\#_{\mathrm{cusp}}
(\R\times \Sigma_g)_{\mathrm{leg}}$ of $M_{\mathrm{leg}}$ and $(\R\times
\Sigma_g)_{\mathrm{leg}}$ as defined in \cite{NILSIR}. Observe that even
though the cusp connected sum construction depends on the cusp edges
of the components and that the notion of Legendrian lift in the
contactization is defined only up to a shift in the contactization
direction, we can assume that the projection of $M_{\mathrm{leg}}\#_{\mathrm{cusp}}
(\R\times \Sigma_g)_{\mathrm{leg}}$ to the symplectization $\R\times
\C^2\times \R$ can be seen as an image of a self-transverse exact
Lagrangian immersion $f_{0}:M\# (\R\times \Sigma_g)\to \R\times
\C^2\times \R$ with $2k$ double points. Finally, we apply Theorem
2.2 from \cite{LC} and see that there exists a compactly supported
isotopy $f_t:M\# (\R\times \Sigma_g)\to \R\times \C^2\times \R$,
$t\in [0,1]$, such that $f_1:M\# (\R\times \Sigma_g)\to \R\times
\C^2\times \R$ defines an embedded exact Lagrangian endocobordism of
$\Sigma_g$ and we denote it by $L:=f_1(M\# (\R\times \Sigma_g))$.

\subsection{Proof of Proposition~\ref{bigspuntriviallooselegspheres}}
Consider the trivial cylindrical cobordism $\R \times \Lambda$ from $\Lambda$ to $\Lambda$, which is of dimension $n+1$. We first perform $N$ number of Legendrian ambient 0-surgeries (i.e. cusp connected sums) on 0-spheres inside $\R \times \Lambda$, producing an exact Lagrangian immersed cobordism diffeomorphic to
\[\R \times \Lambda \# \underbrace{(S^1 \times S^n) \# \hdots \# (S^1 \times S^n}_N)\]
having exactly $N$ transverse double-points. Each such double-point can moreover be seen to have grading $n$ (modulo the Maslov number) and, in the case when $n+1=2\ell$, its Whitney self-intersection index is given by $\sigma=(-1)^{n(n-1)/2+1}$. We then perform $N$ number of Legendrian ambient 1-surgeries on contractible curves in the above cobordism. Each such surgery adds a transverse double-point of grading $n-1$ which, in the case when $n+1=2\ell$, has Whitney self-intersection index $-\sigma$. In order to find the isotropic surgery 2-discs which determine the latter surgery, observe that there exists an isotropic 2-disc in $\C^{n+1}$, $n \ge 2$, whose interior is disjoint from the real-part and whose boundary intersects the real-part orthogonally. This can be seen by first finding an isotropic disc with the correct behavior along the boundary, but whose interior intersects the real-part, and then using a general position argument.

The result in \cite{LC} provides an embedded exact Lagrangian cobordism $L$ from $\Lambda$ to $\Lambda$ which is regular homotopic to the above immersed cobordism. Here we have used the fact that $\Lambda$ is loose, together with the fact that the number of double-points of the immersed cobordism is even and, furthermore, that its total relative self-intersection index is zero in the case $n+1=2\ell$. Finally, observe that $\dim H_{1}(L;\F) \ge N$. This finishes the proof.

\section{Wrapped Floer homology and a version of Seidel's isomorphism}
\label{sec:wrapped}
Here we introduce a version of the wrapped Floer homology complex for an exact Lagrangian filling, defined over an arbitrary ring $R$, that will suit our purposes. We will then prove Seidel's isomorphism (Theorem \ref{thm:isom}) in this setting.

The version of Legendrian contact homology defined in contactization
$(P \times \R,\alpha)$ of an exact symplectic manifold
$(P,d\theta)$, where the differential is defined by a count of
pseudo-holomorphic polygons in $P$, can be defined over $\mathbb{Z}$
by the work in \cite{OILCHAELI} in the case when the Legendrian
submanifold is spin. However, signs are yet to be defined for the
version of Legendrian contact homology whose differential counts
pseudo-holomorphic disks in the symplectization. Consequently, the
wrapped Floer homology as defined in \cite{RSFTLLCHALFC}, together
with the exact sequence in \cite{ANOLCBLSOR}, are also only defined
with coefficients in $\mathbb{Z}_2$ at this point. Instead of
introducing signs into this theory, we will circumvent this issue by
relying on the signs defined in \cite{OILCHAELI}.

Given an $(n+1)$-dimensional exact Lagrangian filling $L_{\Lambda}$ of
$\Lambda$, we consider the following construction. Consider a smooth cut-off function $\rho \co \R
\to \R$ satisfying $\rho'(t) \ge 0$,
$\rho(t)=0$ for $t \le-1$, and $\rho(t)=1$ for $t \ge 0$. Let
$m$ and $M$ denote the minimal and maximal length of the Reeb chords on $\Lambda$, respectively, and let $\Lambda'$ be the Legendrian manifold obtained by translating $\Lambda$ by
$-\epsilon$ in the $z$-coordinate, where $0<\epsilon<m$ is arbitrary. We let $L_{\Lambda'}$ be the exact Lagrangian filling obtained by translating $L_\Lambda$ by $-\epsilon$ in the $z$-coordinate as well, and then perturbing it by a $C^1$-small Hamiltonian
isotopy so that the intersection-points $L_\Lambda \cap
L_{\Lambda'}$ become transverse. For each $N>0$, we define the exact
Lagrangian filling
\[L_\Lambda^{\epsilon,M,N} := \phi^{-M}(L_{\Lambda'}),\]
where $\phi^s \co \R \times (P \times \R) \to \R \times (P \times
\R)$ is the flow of the Hamiltonian vector-field $\rho(t-N)
\partial_z$. Observe that, for generic $L$, $\Lambda$, and $\rho$,
and for $N > 0$ sufficiently large, the intersections $L_\Lambda \cap
L_\Lambda^{\epsilon,M,N}$ outside of the set $\{ t \le N-1\}$ are all transverse
double-points which furthermore are contained in $\{ N-1 \le
t \le N \}$ and correspond bijectively to the Reeb-chords on
$\Lambda$.

Since the exact Lagrangian immersion $L_\Lambda \cup
L_{\Lambda}^{\epsilon,M,N}$ is disconnected, there is a choice involved when constructing a Legendrian
lift to the contactization of the symplectization.
We will choose a lift for which all Reeb chords, which constitute a finite
set, start on the lift of $L_\Lambda$. We will not distinguish
between the union of Lagrangian fillings and of its Legendrian lift,
but for the latter we will implicitly always choose the lift as
above.

In the following we will use $R$ to denote an arbitrary unital ring.
Assuming that $L_\Lambda$ is spin, we fix a spin-structure and a
compatible almost complex structure on the symplectization $\R
\times P \times \R$ which is cylindrical outside of a compact set,
and consider the induced Legendrian contact homology DGA
\[(\mathcal{A} (L_\Lambda \cup L_{\Lambda}^{\epsilon,M,N};R),\partial)\]
with coefficients in $R$ which is induced by the above Legendrian
lift. In the non-spin case, we must take $R=\Z_2$. Recall that the grading of this complex depends on the choice
of a Maslov potential. We refer to \cite{LCHIPR} and \cite{OILCHAELI} for more
details.

The difference between the set-up here and that in \cite{LCHIPR}
is that our Legendrian submanifold is non-compact and that our exact
symplectic manifold has a concave end. For that reason, we need to establish the following compactness result.
\begin{lem}
\label{lem:compactness}
Let $(\overline{P},d\theta)$ be a compact exact symplectic manifold with boundary and let
$J_s$, $s \in [0,1]$, be a smooth family of compatible almost complex
structures on $(\R \times \overline{P} \times \R,d(e^t\alpha))$ coinciding with a
fixed cylindrical almost complex structure outside of a compact set.
We moreover require that $\R \times \partial\overline{P} \times \R$ is
$J_s$-convex for all $s \in [0,1]$.

Suppose that
\[L_{\Lambda}, L_{\Lambda'}^s \subset \R \times \mathrm{int}\overline{P} \times \R, \:\: s\in[0,1],\]
are exact Lagrangian fillings, where the family $L_{\Lambda'}^s$ depends smoothly on $s$ and is fixed outside of a
compact set, and where $\Lambda \cup \Lambda'$ is embedded and front generic. There is a compact subset of $\R \times
\mathrm{int}\overline{P} \times \R$ that contains all
$J_s$-holomorphic disks in $\R \times \mathrm{int}\overline{P}
\times \R$, $s \in [0,1]$, having
\begin{itemize}
\item boundary on $L_\Lambda \cup L_{\Lambda'}^s$,
\item bounded image,
\item exactly one positive puncture, and
\item finite $d(e^t\alpha)$-energy.
\end{itemize}
\end{lem}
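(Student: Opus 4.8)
The plan is to combine a standard SFT-type compactness argument with the use of the $J_s$-convexity of $\R \times \partial\overline{P} \times \R$ and of the boundary at infinity of $\overline{P}$, in order to confine the holomorphic disks to a compact subset of $\R \times \mathrm{int}\overline{P} \times \R$. Since we want a \emph{uniform} compact set working for the whole family $J_s$, $s \in [0,1]$, compactness of the parameter interval will be used throughout. I would organize the argument into confinement in the three ``directions'' in which the target manifold is non-compact: the $t$-direction (the symplectization coordinate), the $z$-direction (the Reeb/contactization coordinate), and the directions towards $\partial\overline{P}$.

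First I would treat the $t$-direction. Here the hypotheses are that each disk has bounded image, has exactly one positive puncture, and has finite $d(e^t\alpha)$-energy; moreover $J_s$ is cylindrical outside a compact set and the fillings $L_\Lambda, L^s_{\Lambda'}$ are cylindrical (equal to $\R_- \times \Lambda$, resp.\ $\R_- \times \Lambda'$, near $-\infty$) and depend on $s$ only inside a compact set. This is precisely the situation of the standard maximum-principle/monotonicity arguments for holomorphic curves in symplectizations: an upper bound on $t$ along the disk follows from the energy bound together with the single positive puncture and a Stokes/action argument (the positive end has an a priori action bound because the Reeb chords on $\Lambda$ form a finite set, so their actions are bounded), while a lower bound on $t$ — i.e.\ no escape to the concave end $t \to -\infty$ — follows because a disk escaping to $-\infty$ would, after SFT compactness, break off a non-constant holomorphic curve in the negative cylindrical end $\R \times \Lambda$ (or $\R \times \Lambda'$) with no positive puncture, which is impossible by the usual action/energy positivity. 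The key point to make precise is that the bound is uniform in $s$, which holds because the relevant geometry is $s$-independent outside a fixed compact set and $[0,1]$ is compact.

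Next I would deal with the $z$-direction and the $\partial\overline{P}$-direction together, since both are handled by a convexity/maximum-principle argument. For the boundary $\partial\overline{P}$: the hypothesis that $\R \times \partial\overline{P} \times \R$ is $J_s$-convex for all $s$ means there is an exhausting $J_s$-plurisubharmonic-type function $\varphi$ near that hypersurface (pulled back from the $J_P$-convex function $\psi$ on $P$ postulated in the introduction, extended trivially in the $t$ and $z$ directions), and since $L_\Lambda$ and $L^s_{\Lambda'}$ lie in $\R \times \mathrm{int}\overline{P} \times \R$ and agree with cylinders over $\Lambda \cup \Lambda'$ near infinity in $P$, the composition $\varphi \circ u$ of $\varphi$ with a holomorphic disk $u$ is subharmonic and attains no interior maximum, and its boundary maximum is controlled because $\partial u$ lies on a fixed Lagrangian; hence $u$ cannot approach $\R \times \partial\overline{P} \times \R$, with an $s$-uniform bound. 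For the $z$-direction: $\Lambda \cup \Lambda'$ being embedded and front generic, together with $L_\Lambda, L^s_{\Lambda'}$ being cylindrical in $z$ near infinity, lets one use the coordinate function $z$ (or rather $e^t \cdot$(something) adapted to the contact form) and the fact that $dz$ restricted to the Lagrangians is controlled, again via a maximum principle for the $J_s$-holomorphic map, to confine $u$ to a compact range of $z$, uniformly in $s$; here one uses that there are only finitely many Reeb chords so the ``height'' jumps at the punctures are bounded. Intersecting the three a priori bounds produces the desired $s$-uniform compact subset.

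The main obstacle I expect is the $\partial\overline{P}$-confinement near the \emph{boundary} of the disk, i.e.\ the interaction of the $J_s$-convexity of $\R \times \partial\overline{P} \times \R$ with the Lagrangian boundary condition: a boundary maximum principle requires either that the Lagrangians stay a definite distance from $\R \times \partial\overline{P} \times \R$ (which is true since they are cylindrical over $\Lambda \cup \Lambda' \subset \mathrm{int}$ near infinity, but one must check the perturbed region too) or that they are suitably ``tangent to the Liouville/convexity foliation'' near the boundary. I would handle this by shrinking slightly: choose the compact set $K$ outside of which $\psi$ is $J_P$-convex so that $\Pi_P$ of both $\Lambda$ and $\Lambda'$, and of the compactly-supported perturbation region, lie inside $K$; then $\varphi \circ u$ is genuinely subharmonic with boundary values bounded by $\max_{\partial\overline{P}\text{-collar}} \varphi$ being \emph{not} attained on $\partial u$, so no interior or boundary maximum can occur in the collar, giving the bound. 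The rest is then the routine SFT compactness bookkeeping, and uniformity in $s$ is automatic from compactness of $[0,1]$ together with the $s$-independence of all structures outside a fixed compact set.
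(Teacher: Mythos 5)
Your overall architecture (uniform energy bound from exactness, SFT compactness, the $J_s$-convex hypersurface as a barrier in the $\overline{P}$-direction, uniformity in $s$ from compactness of $[0,1]$) is close to the paper's, but there is a genuine error at the crucial step, the lower bound on $t$. You assert that the fillings are cylindrical and equal to $\R_-\times\Lambda$, resp.\ $\R_-\times\Lambda'$, near $-\infty$, and you rule out escape to $t\to-\infty$ by saying a curve would break off a piece with boundary on these negative cylindrical ends ``with no positive puncture''. This contradicts the definition used in the paper: an exact Lagrangian \emph{filling} has empty $-\infty$-boundary, so $L_\Lambda\cup L^s_{\Lambda'}$ is contained in $\{t\ge A\}$ and there is no Lagrangian at all in the concave end. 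Consequently your breaking argument is aimed at the wrong boundary conditions, and its conclusion is also misstated: lower levels of a holomorphic building do carry positive asymptotics (matching the negative punctures of the level above), so ``no positive puncture plus energy positivity'' is not what kills them. The correct mechanism, and the one the paper uses, is that after establishing a Hofer-type energy bound (via a cutoff function $\varphi(t)$ with $\varphi=e^t$ for $t\ge A$, so that exactness of the Lagrangians gives a bound independent of $s$) one applies SFT compactness; a sequence with points escaping to $-\infty$ would converge to a building with at least two levels, and any lower level would have to be asymptotic to closed Reeb orbits of $(\overline{P}\times\R,dz+\theta)$ --- but the Reeb field is $\partial_z$, which has no closed orbits. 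Your proposal never invokes this absence of Reeb orbits, and without it (or some substitute) the $t\to-\infty$ case is not ruled out.

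Two smaller points. For $t\to+\infty$, a Stokes/action estimate alone does not bound the $t$-coordinate; the paper converts the uniform $d(e^t\alpha)$-area bound into a height bound using Sikorav's monotonicity property for disks with boundary, and you should make that step explicit rather than folding it into ``standard maximum-principle/monotonicity arguments''. Also, for the $\overline{P}$-direction the paper needs only the barrier statement that no $J_s$-holomorphic curve can touch the $J_s$-convex hypersurface $\R\times\partial\overline{P}\times\R$ from the inside; your more elaborate plurisubharmonic-function discussion is not wrong, but the content of the lemma lies in the $t$-direction estimates, which is where your argument currently fails.
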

\begin{proof}
First, observe that no $J_s$-holomorphic curve can be tangent to $\R
\times \partial \overline{P} \times \R$ from the inside since the latter
hypersurface is $J_s$-convex. It thus suffices to show that a
pseudo-holomorphic curve as in the assumption has $t$-coordinate
satisfying the bound $|t|<C$ for some constant $C$ independent of $s \in
[0,1]$.

By contradiction, we assume that there is a sequence $u_n$ of
$J_s$-holomorphic disks satisfying the properties of the assumption
for which $(t_n,p_n) \in \mathrm{im}(u_n)$, where $t_n \to
\pm\infty$.

We start by establishing a bound on the Hofer energy of the $J_s$-holomorphic disks $u$ as in
the assumption. Take a real number $A$ such that $J_s$ all are cylindrical in $\{ t
\le A\}$ and $L_\Lambda \cup L_{\Lambda'}^s \subset \{ t \ge A \}$.
Consider a smooth function $\varphi(t) \ge 0$ satisfying
$\varphi'(t) \ge 0$, and $\varphi(t)=e^t$ for $t \ge A$. The fact
that $e^t\alpha$ is exact when restricted to the fillings implies
there is a constant $D$ independent of $s \in [0,1]$ for which the inequality
\[ \int_u d(\varphi(t)\alpha) \le D\]
is satisfied for any $J_s$-holomorphic $u$ as in the assumption.

The above energy bound implies that the compactness theorem for pseudo-holomorphic curves
in symplectic field theory \cite{CRISFT} can be applied. This
rules out the possibility that $t_n \to -\infty$, since such a
sequence of $u_n$ would have a sub-sequence converging to a
pseudo-holomorphic building consisting of at least two levels,
contradicting the fact that there are no periodic Reeb orbits on
$(\overline{P} \times \R,dz+\theta)$.

To rule out the case $t_n \to +\infty$ we argue as follows. There is
a bound on $d(e^t\alpha)$-energy independent of $s\in[0,1]$ for any
$J_s$-holomorphic disk $u$ as in the assumption. The monotonicity
property for the $d(e^t\alpha)$-area of pseudo-holomorphic disks
with boundary \cite[Propositions 4.3.1 and 4.7.2]{SPOHCIACM})
implies that the $d(e^t\alpha)$-area of $u_n$ converges to $+\infty$,
which leads to a contradiction.
\end{proof}

Recall that, by assumption, there is a compact domain $\overline{P}\subset P$ with $J_P$-convex boundary, for some compatible almost complex structure $J_P$ on $(P,d\theta)$, such that $L_\Lambda \cup  L_{\Lambda}^{\epsilon,M,N} \subset \R \times \mathrm{int}\overline{P} \times \R$. Assume that the compatible almost complex structure $J$ on $\R \times \overline{P} \times \R$ satisfies the assumptions of Lemma \ref{lem:compactness}, as well as the integrability condition specified in \cite{LCHIPR} in some neighbourhood of the double points $L_\Lambda \cap  L_{\Lambda}^{\epsilon,M,N}$. We will call such a choice of almost complex structure \emph{admissible}. For example, one can use a perturbation of a cylindrical almost complex structure $J$ on $\R \times P \times \R$ for which the canonical projection $\R \times P \times \R \to P$ is $(J,J_P)$-holomorphic (see e.g.~\cite{LPPTTSOPTRAA}).

For an admissible almost complex structure, the consequences of Lemma \ref{lem:compactness} implies that \cite[Lemma 2.5]{LCHIPR} and \cite[Proposition 2.6]{LCHIPR} can be applied in this non-compact setting as well. In particular, we have $\partial^2=0$, and the homotopy-type
of the DGA is invariant under compactly supported Legendrian
isotopies as well as compactly supported deformations of the
admissible almost complex structure.

We will however not consider the full DGA, but only its ``linear
part''. Since each component of $L_\Lambda \cup L_{\Lambda}^{\epsilon,M,N}$
is embedded, the above differential $\partial$ respects the
word-length filtration of the DGA and thus descends to a
``linearized differential'' on the $R$-module
\[(CF_i(L_\Lambda,L_\Lambda^{\epsilon,M,N};R),\partial)\]
freely generated by the double points $L_\Lambda \cap L_\Lambda^{\epsilon,M,N}$ (recall that there is a bijective correspondence between these double points and the Reeb chords on the Legendrian lift). We will consider the induced co-complex
\[(CF^i(L_\Lambda,L_{\Lambda}^{\epsilon,M,N};R),d).\]
Given double points $a,b$, the coefficient in front of $b$ in the expression
$d(a)$ is thus defined to be the signed count of rigid $J$-holomorphic
strips
\begin{gather*}
u \co \R \times [0,1] \to \R \times P \times \R,\\
du +Jdu\circ i=0,
\end{gather*}
having bounded image and satisfying
\[\begin{cases}u(s,0) \in L_\Lambda, \:\: u(s,1) \in L_\Lambda^{\epsilon,M,N},\\
\lim_{s \to +\infty} u(s,t)=b, \:\: \lim_{s \to -\infty} u(s,t)=a,\\
\int_u d(e^t\alpha)<\infty.
\end{cases}
\]
We refer to \cite[Formula (2.2)]{LCHIPR} for more details.

\begin{rem}
For coefficients $R=\Z_2$, this complex coincides with the wrapped
Floer homology complex in \cite{RSFTLLCHALFC} and
\cite{LPPTTSOPTRAA}.
\end{rem}

We have the following invariance result.
\begin{prop}[\cite{LCHIPR}]
\label{prop:inv} $(CF^i(L_\Lambda,L_{\Lambda}^{\epsilon,M,N};R),d)$ defines
a complex whose homotopy-type is invariant under compactly supported
deformations of an admissible almost complex structure as well as of
compactly supported Hamiltonian isotopies. In partcular, it is
null-homotopic for any choice of spin structure and compatible
almost complex structure as above.
\end{prop}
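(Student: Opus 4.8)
The plan is to establish Proposition~\ref{prop:inv} in two stages: first the invariance statement, then the null-homotopy. The invariance under compactly supported deformations is the standard Floer-theoretic continuation argument, but it must be justified in the present non-compact setting where $P$ has a concave end; this is exactly where Lemma~\ref{lem:compactness} does the work. Concretely, given two admissible almost complex structures $J_0, J_1$ (or two choices of compactly supported Hamiltonian perturbation), one interpolates through a generic path $J_s$, $s\in[0,1]$, which is constant outside a compact set, and counts the resulting one-parameter moduli space of $J_s$-holomorphic strips with one positive puncture. Lemma~\ref{lem:compactness} guarantees that all such strips stay inside a fixed compact subset of $\R\times\mathrm{int}\overline{P}\times\R$, so no energy escapes into the concave end and no breaking off periodic Reeb orbits on $(\overline{P}\times\R, dz+\theta)$ can occur (there are none). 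Hence the usual SFT compactness \cite{CRISFT} together with \cite[Lemma 2.5, Proposition 2.6]{LCHIPR} applies verbatim, the boundary of the one-dimensional moduli space is the graph of a chain homotopy between the continuation maps, and one concludes that the homotopy-type of $(CF^i(L_\Lambda,L_\Lambda^{\epsilon,M,N};R),d)$ is independent of the admissible choices.

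For the null-homotopy, the key observation is that the two Lagrangian fillings $L_\Lambda$ and $L_\Lambda^{\epsilon,M,N}=\phi^{-M}(L_{\Lambda'})$ can be displaced off each other by a compactly supported Hamiltonian isotopy. Indeed, recall from the construction that $L_{\Lambda'}$ is $L_\Lambda$ translated down by $\epsilon<m$ in the $z$-direction (plus a $C^1$-small perturbation), and $\phi^{-M}$ is the flow of $\rho(t-N)\partial_z$, which pushes the filling further down by an amount that is $M$ at the positive end and $0$ at the negative end; by choosing $N$ large and $M$ at least the maximal chord length, the intersections of $L_\Lambda$ with $L_\Lambda^{\epsilon,M,N}$ are all concentrated in $\{N-1\le t\le N\}$ and correspond to Reeb chords. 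Since at the negative cylindrical end both fillings agree with $\R\times\Lambda$ translated by different $z$-amounts — and these translations differ by more than the chord lengths permit a chord to exist — there is no obstruction to sliding $L_\Lambda^{\epsilon,M,N}$ entirely below $L_\Lambda$ in the $z$-coordinate via a compactly supported Hamiltonian. After such a displacement the two Lagrangians are disjoint, the complex $(CF^i,d)$ has no generators, and hence is trivially null-homotopic. Applying the invariance established in the first stage, the original complex is null-homotopic as well.

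The main obstacle will be the displacement argument: one must check carefully that the Hamiltonian realizing the displacement can be taken compactly supported, which requires that at the concave ($-\infty$) end the two fillings, being $z$-translates of $\R\times\Lambda$ by amounts differing by $\epsilon$, are already disjoint there (this holds since $\Lambda$ is embedded and the $z$-shift is small but nonzero, or one may first arrange a further shift), so that the Hamiltonian need only act on a compact region. One also needs to know that during the displacement isotopy the almost complex structure can be kept admissible, or equivalently that the continuation argument of the first stage tolerates the family of Hamiltonians — this is automatic since the Hamiltonians are compactly supported and Lemma~\ref{lem:compactness} is stated for a smooth family $L_{\Lambda'}^s$ fixed outside a compact set, which is precisely the situation at hand. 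The remaining points — genericity to achieve transversality of the relevant moduli spaces, and the identification of the $\Z_2$-version of this complex with the wrapped Floer complex of \cite{RSFTLLCHALFC} — are routine and already recorded in the preceding remark.
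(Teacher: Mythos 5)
Your proposal is correct and follows essentially the same route as the paper: invariance is deduced from Lemma~\ref{lem:compactness} together with \cite[Lemma 2.5, Proposition 2.6]{LCHIPR}, and the null-homotopy is obtained by displacing $L_\Lambda^{\epsilon,M,N}$ from $L_\Lambda$ by a downward push in the $z$-direction (the paper uses the time-$T$ flow of $(\rho(t-N)-1)\partial_z$ for $T\gg 0$), realized as a compactly supported Hamiltonian isotopy, and then invoking invariance. One factual slip worth fixing: the fillings have no negative cylindrical end, and the intersection points are not all contained in $\{N-1\le t\le N\}$ (there are also the ``Morse-type'' intersections in the compact region $\{t\le N-1\}$ coming from the $C^1$-small perturbation); compact support of the displacing isotopy is instead automatic because only the compact part $\{t\le N\}$ of $L_\Lambda^{\epsilon,M,N}$ needs to move, the positive cylindrical ends being already disjoint since there the relative shift $\epsilon+M$ exceeds the maximal chord length $M$ — with this correction your displacement argument is exactly the paper's.
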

\begin{proof}
Using Lemma \ref{lem:compactness} it follows that \cite[Lemma 2.5, Proposition 2.6]{LCHIPR} can be applied. This shows that
$d^2=0$ holds together with the above invariance property.

To show that the complex is null-homotopic, we argue as follows.
There is a Hamiltonian vector-field
\[X:=(\rho(t-N)-1)\partial_z\]
with $\rho$ as above, whose time-$T$ flow has the property that
\[L_\Lambda \cap  \phi^T_X(L_\Lambda^{\epsilon,M,N}) = \emptyset,\]
for $T>0$ sufficiently large.
The invariance result now implies the statement, since this isotopy
of $L_\Lambda^{\epsilon,M,N}$ may be induced by a compactly supported
Hamiltonian isotopy, as follows by a standard argument.
\end{proof}

Let $CF^i_\infty(L_\Lambda,L_{\Lambda}^{\epsilon,M,N}) \subset
CF^i(L_\Lambda,L_{\Lambda}^{\epsilon,M,N})$ be the sub-module generated by
the double-points corresponding to the Reeb chords on $\Lambda \subset P \times \R$. Recall
that these all are contained in the set $\{ N-1 \le t \le N \}$.
\begin{lem}
For $N>0$ large enough,
\[CF^i_\infty (L_\Lambda,L_\Lambda^{\epsilon,M,N};R) \subset (CF^i(L_\Lambda,L_{\Lambda}^{\epsilon,M,N};R),d)\]
is a sub-complex.
\end{lem}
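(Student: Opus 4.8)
The plan is to show that the differential $d$ cannot send a generator corresponding to a Reeb chord on $\Lambda$ to a generator corresponding to an intersection point outside of the region $\{N-1 \le t \le N\}$, at least once $N$ is taken sufficiently large. Recall that the coefficient of $b$ in $d(a)$ counts rigid $J$-holomorphic strips with boundary on $L_\Lambda \cup L_\Lambda^{\epsilon,M,N}$, bounded image, positive puncture at $a$ and negative puncture at $b$, and finite $d(e^t\alpha)$-energy. The generators of $CF^i_\infty$ are the double points lying in $\{N-1 \le t \le N\}$; the remaining generators of $CF^i$ are the ``old'' intersection points $L_\Lambda \cap L_{\Lambda'}$ created by the $C^1$-small Hamiltonian perturbation, and these are confined to a fixed compact set $K_0 \subset \R \times P \times \R$ independent of $N$ (they live near $L_\Lambda$, roughly in $\{|t| \le T_{L_\Lambda}\}$). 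So it suffices to prove: for $N$ large, no strip as above can have its positive puncture at a point of $\{N-1\le t\le N\}$ and its negative puncture in $K_0$.

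First I would invoke an \emph{action} (energy) estimate. Each double point $a$ has an associated action $\mathfrak{a}(a)$, essentially $\int \theta$ along the capping paths, and a $J$-holomorphic strip from $a$ to $b$ satisfies $\int_u d(e^t\alpha) = \mathfrak{a}(a) - \mathfrak{a}(b) \ge 0$, so $\mathfrak{a}(a) \ge \mathfrak{a}(b)$. The key computation is that, because $L_\Lambda^{\epsilon,M,N}$ is obtained from $L_{\Lambda'}$ by flowing along $\rho(t-N)\partial_z$, the intersection points in $\{N-1\le t\le N\}$ have action differing from the action of a point of $K_0$ by a quantity that grows (or is controlled) in terms of the ``vertical displacement'' $M$ and the geometry near $t=N$. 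I would argue that the action of any generator in $\{N-1 \le t \le N\}$ is strictly smaller than the action of any generator in $K_0$: indeed the chords on $\Lambda$ have length at most $M$, while the perturbation parameter has been chosen so that the ``old'' intersection points all have action bounded below by, say, $m - O(\epsilon) > 0$ once we shift by the $z$-translation, whereas the shifted-down copy makes the new chords carry action on the order of... — more carefully, I would set up the action filtration so that the new generators sit \emph{below} the old ones, hence $d$ cannot increase action and therefore cannot map $CF^i_\infty$ out of itself. Alternatively, and perhaps more cleanly, I would use a neck-stretching / SFT-compactness argument (Lemma~\ref{lem:compactness}, together with \cite{CRISFT}): if such a strip existed for arbitrarily large $N$, then stretching the neck in the region where $L_\Lambda^{\epsilon,M,N}$ is being pushed in the $z$-direction would produce a broken configuration one of whose components is a $J$-holomorphic disk or strip in $P\times\R$ with a puncture asymptotic to a Reeb chord and appropriate boundary, and such a component would have to have negative or zero formal dimension forcing it to be constant, contradicting that the original strip was non-constant and connected the two prescribed points in distinct $t$-regions.

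I would then combine the two observations: any rigid strip contributing to $d(a)$ for $a \in CF^i_\infty$ has finite energy bounded independently of $N$, its positive puncture at $t \approx N$, and by the monotonicity estimate (\cite[Prop.~4.3.1, 4.7.2]{SPOHCIACM}) it cannot travel a $t$-distance of order $N$ while keeping bounded area; hence for $N$ larger than some threshold determined by the fixed energy bound and the fixed set $K_0$, the strip stays within $\{t \ge N-2\}$, so its negative puncture is again a generator of $CF^i_\infty$. This shows $d(CF^i_\infty) \subset CF^i_\infty$, i.e. $CF^i_\infty(L_\Lambda,L_\Lambda^{\epsilon,M,N};R)$ is a subcomplex, which is the claim.

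The main obstacle I anticipate is making the action/energy bookkeeping genuinely uniform in $N$: one must check that the $C^1$-small Hamiltonian perturbation and the choice of $\rho$ can be made so that the energy of any relevant strip is bounded by a constant independent of $N$, \emph{and} that the ``new'' generators genuinely sit at the bottom of the action filtration. The cut-off function $\rho$ contributes to the action precisely through $\int \rho$, which I would arrange to be a fixed finite quantity, so that the action of the new intersection points is $M$-dependent but $N$-independent; then the separation between the $K_0$-generators and the $CF^i_\infty$-generators in the action filtration is uniform, and the subcomplex statement follows. A secondary technical point is ruling out strips with boundary that crosses the neck region multiple times — here one again appeals to the monotonicity lemma and the bounded-image hypothesis to control the geometry.
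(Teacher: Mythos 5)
Your overall strategy -- ruling out, for $N$ large, strips from a Reeb-chord generator $a$ to an ``old'' intersection point $b$ by an action/energy argument -- is in the same spirit as the paper, but the decisive step is missing, and the way you propose to supply it would fail. The paper's proof is a short Stokes computation with the primitives of $e^t\alpha$: any strip $D$ contributing the coefficient of $b$ in $d(a)$ satisfies
\begin{equation*}
0<\int_D d(e^t\alpha)=\bigl(f(b)-g_N(b)\bigr)-\bigl(f(a)-g_N(a)\bigr),
\end{equation*}
where $f$ and $g_N$ are the primitives of $e^t\alpha$ pulled back to $L_\Lambda$ and $L_\Lambda^{\epsilon,M,N}$. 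Because the wrapping isotopy $\phi^{-M}$ is supported in $\{t\ge N-1\}$, where the form $e^t\alpha$ carries the conformal weight $e^t\approx e^N$, the primitive $g_N$ at the chord generators is negative of order $e^{N}$, while $f$ and the values of $g_N$ at the old intersection points stay bounded independently of $N$; hence for $N$ large the right-hand side is negative and no such strip exists. This is exactly why the lemma requires ``$N$ large''. Your plan runs against this mechanism: you propose to arrange the action of the new generators to be ``$M$-dependent but $N$-independent'' by fixing $\int\rho$, and to obtain a separation ``uniform in $N$''. But the action that controls holomorphic strips in the symplectization is the $e^t$-weighted one, its $N$-dependence is the whole point, and your heuristic comparison via chord lengths $m,M$ and the $\epsilon$-shift is an $\alpha$-length count, not a computation of $f$ and $g_N$; moreover you never fix sign conventions, so ``the new generators sit below the old ones and $d$ cannot increase action'' is asserted rather than derived.

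Your fallback arguments contain the same gap in different form. The monotonicity route presupposes an energy bound for the relevant strips that is independent of $N$; but the only a priori energy bound available is the Stokes identity above, i.e.\ precisely the action difference you have not computed -- and once you compute it you are finished directly, with no need for monotonicity or neck-stretching. (Monotonicity is what the paper uses in Lemma~\ref{lem:compactness}, where a uniform energy bound \emph{is} available, for a different purpose.) The neck-stretching sketch is also not sound as stated: breaking in the cylindrical region produces configurations asymptotic to chords of the two-copy Legendrian link, which do exist, and the claim that a limit component ``would have to have negative or zero formal dimension'' is offered without any index computation. In short, you identified the right statement to prove, but the actual computation of the primitives $f,g_N$ -- which is essentially the entire content of the paper's proof -- is absent, and the substitute arguments do not close the gap.
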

\begin{proof}
Let $a,b \in L_\Lambda \cap L_{\Lambda}^{\epsilon,M,N}$
be double points, where $b$ is contained in the complement of $\{ t \ge N-1 \}$
and $a$ corresponds to a Reeb chord on $\Lambda$. We will show that
the coefficient of $b$ vanishes in the expression $d(a)$.

Recall that this coefficient is given by a count of $J$-holomorphic strips $D\subset \R \times P \times \R$ with boundary on $L_\Lambda \cup L_{\Lambda}^{\epsilon,M,N}$ whose $d(e^t\alpha)$-area can be computed by
\[0<\int_D d(e^t\alpha) =\int_{\partial{D}} e^t\alpha=(f(b)-g_N(b))-(f(a)-g_N(a)),\]
where $f$ and $g_N$ are the primitives of $e^t\alpha$ pulled back to $L_\Lambda$ and $L_{\Lambda}^{\epsilon,M,N}$, respectively. The claim follows since, after increasing $N$, we may assume that $g_N(b)=0$ while $g_N(a)<0$ is arbitrarily small.
\end{proof}

We shall write
\[ CF^i_\infty (L_\Lambda;R) := CF^i_\infty (L_\Lambda,L_\Lambda^{\epsilon,M,N};R),\]
given that $N>0$ is sufficiently big, so that the above lemma can be applied. Note that this complex is generated by the Reeb chords on $\Lambda$, but that the differential depends on the choice of almost complex structure, the numbers $\epsilon,M,N>0$, as well as the Hamiltonian perturbation used in the construction of $L_\Lambda^{\epsilon,M,N}$.

\begin{proof}[Proof of Theorem \ref{thm:isom}]
After an appropriate compactly supported Hamiltonian isotopy, we may
suppose that there is a Weinstein neighborhood of $L_\Lambda \cap \{
t \le N -1 \}$ isomorphic to a neighborhood of the zero-section of
$(T^*(L_\Lambda \cap \{ t \le N-1 \}),d\theta_{L_\Lambda})$ (where
$\theta_{L_\Lambda}$ denotes the Liouville form), where $L_\Lambda$
is identified with the zero-section and $L_\Lambda^{\epsilon,M,N}
\cap \{ t \le N -1 \}$ is given as a graph $-df$. To that end,
observe that $L_\Lambda^{\epsilon,M,N}$ can be assumed to be
arbitrarily $C^1$-close to $L_\Lambda$ on this set by construction,
given that we choose $\epsilon>0$ to be sufficiently small. We may
furthermore suppose that $f\co L_\Lambda \cap \{ t \le N -1 \} \to
\R$ is a Morse-function and that $df$ evaluates negatively on any
outward-pointing normal to the boundary $L_\Lambda \cap \{ t=N-1\}$.

Let $g$ be a Riemannian metric on $L_\Lambda \cap \{ t \le N-1\}$ for which $(f,g)$ is Morse-Smale. Assume that $f$ is sufficiently small, and that the choice of an admissible almost complex structure is induced by the metric $g$ in some neighborhood of $L_\Lambda \cap \{ t \le N-1\}$ as in \cite[Remark 6.1]{ADESFLCH}. The analytical result in \cite[Lemma 6.11]{ADESFLCH} can be applied to show that
\[(CF^i(L_\Lambda,L_\Lambda^{\epsilon,M,N};R),d) / CF^i_\infty (L_\Lambda,L_\Lambda^{\epsilon,M,N};R) = (C^{\mathrm{Morse}}_{n-i} (f;R),d_f),\]
for generic such choices together with an appropriate choice of Maslov potential. Here, the latter complex is the Morse complex associated to the Morse-Smale pair $(f,g)$.

We refer to \cite[Proposition 3.7(2)]{ADESFLCH} for such a result in the closed case. In the current non-compact setting one must use the fact that, after choosing an appropriate function $f$ as above, we may assume that all relevant pseudo-holomorphic strips are contained inside some arbitrarily small neighborhood of $L_\Lambda \cap \{ t \le N -1\}$. To that end, we might have to make the critical values of $f$ arbitrarily small, while keeping the function fixed in some neighborhood of the boundary $L_\Lambda \cap \{ t = N-1\}$. The sought behavior then readily follows from the monotonicity property for the symplectic area of the relevant pseudo-holomorphic strips \cite{SPOHCIACM}. We also refer to the proof of \cite[Theorem 6.2]{LPPTTSOPTRAA}, where a similar argument is given in detail.

The action-filtration implies that we can write
\begin{gather*} (CF(L_\Lambda, L_\Lambda^{\epsilon,M,N};R),d) = \mathrm{Cone}(\Phi), \\
\Phi \co C^{\mathrm{Morse}}_{n-i} (f;R) \to CF^{i+1}_\infty (L_\Lambda,
L_\Lambda^{\epsilon,M,N};R),
\end{gather*}
as a mapping-cone, where $\Phi$ is a chain-map of degree zero
induced by the differential $d$.

Proposition \ref{prop:inv} shows that the above mapping cone is
acyclic. The map $\Phi$ thus produces the required isomorphism.
\end{proof}

\appendix

\section{Spherical spinning preserves looseness}\label{fappthfrspconstrprlnss}

Here we prove that the front $S^{m}$-spinning construction preserves
looseness. This is natural since, by construction, the $S^m$-spun of
$\Lambda \subset \C^n \times \R$ has a neighbourhood contactomorphic
to $\C^n \times T^*S^m \times \R$, where $\Sigma_{S^m} \Lambda$ is
identified with the product $\Lambda \times 0_{S^m}$. If there is a
neighborhood in $\C^n \times \R$ which intersects $\Lambda$ in a
so-called loose chart, it is then readily seen that a loose chart for $\Lambda \times 0_{S^m}\subset \C^n \times T^*S^m \times \R$ can be produced by taking a suitable
product of neighborhoods. To that end, it is important to notice that $T^*S^m$ has
infinite symplectic area.

\subsection*{Proof of Proposition~\ref{spunlooselegisloose}}

Let $\Lambda\subset \C^n\times \R$ be parametrized by
$f_{\Lambda}: \Lambda \to \C^{n}\times\R$ with
\begin{align*}
f_{\Lambda}(p) = (x(p),y(p),x_{1}(p),y_{1}(p),\dots,x_{n-1}(p),y_{n-1}(p),z(p)).
\end{align*}
Without loss of generality assume that $x_{n-1}(p)>0$ for all $p$. Hence, $\Lambda\subset J^{1}(\R^{n-1}\times\R_{+})\subset J^1(\R^n)$.
Apply the front $S^m$-spinning construction to $\Lambda$, producing $\Sigma_{S^m}\Lambda$ whose front-projection $\Pi_F(\Sigma_{S^m}\Lambda)$ is parametrized by
\begin{align*}
\Pi_{F}\circ f_{\Sigma_{S^m}\Lambda}(p,\theta,\bar{\phi}) = (x(p), x_{1}(p), \dots, x_{n-1}(p)\cos \phi_{m-1}, \dots, x_{n-1}(p)\sin \theta\dots \sin \phi_{m-1},z(p)),
\end{align*}
where $\theta\in [0,2\pi)$ and $\bar{\phi} = (\phi_{1},\dots,\phi_{m-1})\in [0,\pi]^{m-1}$.

Consider  $i: S^m \times \R^{n-1}\times \R_{+} \to \R^{n+m}$ given by
\begin{align*}
i(\theta, \bar{\phi}, x, x_{1}, \dots,x_{n-2},x_{n-1}) = (x, x_{1}, \dots, x_{n-2}, x_{n-1}\cos \phi_{m-1}, \dots, x_{n-1}\sin \theta\dots \sin \phi_{m-1}),
\end{align*}
where $\theta\in [0,2\pi)$ and $\bar{\phi} = (\phi_{1},\dots,\phi_{m-1})\in [0,\pi]^{m-1}$. Note that $i$ is a diffeomorphism onto its image.
This map can be naturally extended to the bundle isomorphism
\begin{align*}
i_{\mathrm{sym}}:=(i^{-1})^{\ast}: T^{\ast} (S^m \times \R^{n-1}\times
\R_{+})\to T^{\ast} (i(S^m\times \R^{n-1}\times \R_{+}))\subset
T^*(\R^{n+m})
\end{align*}
such that $i_{\mathrm{sym}}^{\ast}(\theta_{\R^{n+m}}|_{i(S^m \times \R^{n-1}\times \R_{+})}) = \theta_{S^m}+\theta_{\R^{n-1}\times \R_{+}}$, where $\theta_{\R^{n+m}}$ is the tautological 1-form on $T^{\ast} (\R^{n+m})$, $\theta_{S^m}$ is the tautological 1-form on $T^{\ast} (S^m)$ and $\theta_{\R^{n-1}\times \R_{+}}$ is the tautological 1-form on $T^{\ast}(\R^{n-1}\times \R_{+})$. Then we extend $i_{\mathrm{sym}}$ to the map
\begin{align*}
i_{\mathrm{cont}}: (J^{1}(S^m \times \R^{n-1}\times \R_{+}), dz +\theta_{S^m}+\theta_{\R^{n-1}\times \R_{+}}) \to (J^{1}(\R^{n+m}), dz + \theta_{\R^{n+m}})
\end{align*}
in the natural way, i.e., $i_{\mathrm{cont}}$ maps $z$ to $z$. Observe that $i_{\mathrm{cont}}$ is a contactomorphism onto its image.

From the parametrization of $\Pi_{F}(\Sigma_{S^m}\Lambda)$ it follows that $\Sigma_{S^m}\Lambda$ is a Legendrian submanifold of $i_{\mathrm{cont}}(J^{1}(S^m \times \R^{n-1}\times \R_{+}))$.
We define $\widetilde{\Lambda}$ to be the Legendrian submanifold of $J^{1}(S^m \times \R^{n-1}\times \R_{+})$ parametrized by
$i_{\mathrm{cont}}^{-1}\circ f_{\Sigma_{S^m}\Lambda}(p,\theta,\bar{\phi})$ with $p\in \Lambda$, $\theta\in [0,2\pi)$ and $\bar{\phi} = (\phi_{1},\dots,\phi_{m-1})\in [0,\pi]^{m-1}$. Observe that $\widetilde{\Lambda}$  is a product of the $0$-section $0_{S^m}$ of $T^{\ast} S^m$ and $\Lambda\subset J^1(\R^{n-1}\times\R_{+})$
(here we decompose $J^{1}(S^m \times \R^{n-1}\times \R_{+})$ as $T^{\ast}S^{m}\times J^{1}(\R^{n-1}\times\R_{+})$).

We now show that $\Sigma_{S^{m}}\Lambda$ is loose. Since $\Lambda$ is loose, there exists a neighborhood $U$ of $\Lambda$ in $J^{1}(\R^{n-1}\times\R_{+})$ which is contactomorphic to a standard loose chart $(R^{n-1}_{abc}, \Lambda_{0})$, where
$a<bc$. Without loss of generality assume that $b=1/2$ (we can make this assumption since $(R^{n-1}_{abc}, \Lambda_{0})$ can always be rescaled). Consider $\widetilde{V}:=T^{\ast} S^m\times U$. From the definition of $\widetilde{\Lambda}$ it follows that  $\widetilde{V}\cap \widetilde{\Lambda} = 0_{S^m}\times (U\cap \Lambda)$.
Let $\phi$ denote the contactomorphism
\begin{align*}
(U, U\cap \Lambda, dz + \theta_{\R^{n-1}\times \R_{+}})\to \left(R^{n-1}_{a\frac{1}{2}c}, \Lambda_{0}, dz-ydx-\sum_{i}y_{i}dx_{i}\right)
\end{align*}
and assume that
$\phi^{\ast}(dz-ydx-\sum_{i}y_{i}dx_{i}) = f(s)(dz + \theta_{\R^{n-1}\times \R_{+}})$, where $f\in \C^{\infty}(U,\R)$ with $f(s)\neq 0$ for all $s\in U$. Consider $(\widetilde{V}, \widetilde{V}\cap \widetilde{\Lambda}) = (T^{\ast} S^m\times U, 0_{S^m}\times (U\cap \Lambda))$ equipped with the contact form $dz + \theta_{\R^{n-1}\times \R_{+}} + \theta_{S^m}$, where $\theta_{S^m}=- \sum_{j} p_{j}dq_{j}$. In addition, consider $T^{\ast}S^{m}\times R^{n-1}_{a\frac{1}{2}c}$ with the contact form $dz  - ydx-\sum_{i}y_{i}dx_{i} - \sum_{j} p_{j}dq_{j}$.
We now define the map
$\widetilde{\phi}:\widetilde{V}\to T^{\ast}S^{m}\times R^{n-1}_{a\frac{1}{2}c}$
by setting $\widetilde{\phi}(\bar{q},\bar{p},s)=(\bar{q}, f(s)\bar{p},\phi(s))$, where $s\in U$ and $(\bar{q},\bar{p})\in T^{\ast}S^m$. Since $\phi$ is a diffeomorphism and $f(s)\neq 0$ for all $s\in U$, $\widetilde{\phi}$ is also a diffeomorphism. In addition,
\begin{align*}
\widetilde{\phi}^{\ast}\left(dz - ydx-\sum_{i}y_{i}dx_{i} - \sum_{j} p_{j}dq_{j}\right) = f(\bar{q},\bar{p},s)\left(dz + \theta_{\R^{n-1}\times \R_{+}} - \sum_{j} p_{j}dq_{j}\right)
\end{align*}
for $f(\bar{q},\bar{p},s):=f(s)$, where $s\in U$, $(\bar{q},\bar{p})\in T^{\ast}S^m$, and hence $\widetilde{\phi}$ is a contactomorphism. Finally, observe that $\widetilde{\phi}(\widetilde{V}\cap \widetilde{\Lambda})=\widetilde{\phi}(0_{S^m}\times (U\cap \Lambda))=0_{S^m}\times \Lambda_{0}$.
In conclusion, we get that
$(\widetilde{V}, \widetilde{V}\cap \widetilde{\Lambda}, dz + \theta_{\R^{n-1}\times \R_{+}} - \sum_{j} p_{j}dq_{j})$ is contactomorphic to
\[\left(T^{\ast}S^{m}\times R^{n-1}_{a\frac{1}{2}c}, 0_{S^m}\times \Lambda_{0},dz-ydx-\sum_{i}y_{i}dx_{i}-\sum_{j}p_{j}dq_{j}\right).\]
From the definition of $R^{n-1}_{a\frac{1}{2}c}$ it follows that we can write it as $B^1_1\times B^1_1 \times B^1_a\times B^{n-1}_{\frac{1}{2}}\times B^{n-1}_c$, where $B^{k}_{d}$ is a closed $k$-ball of radius $d$ and all the balls in the decomposition are centered at $0$.
Observe that there exists
\[B^{m}_{1/2}\times B^{m}_c\subset \left(T^{\ast}D^{m}, d\left(-\sum_{k}\widetilde{y}_{k}d\widetilde{x}_{k}\right)\right)\hookrightarrow \left(T^{\ast}S^m, d\left(- \sum_{j} p_{j}dq_{j}\right)\right),\]
 where $D^{m}$ is a closure of one of the hemispheres of $S^{m}$ and $B^{m}_c$ is centered at $0$. Hence, there is some $V$ such that $(V\cap (T^{\ast}S^{m}\times R^{n-1}_{a\frac{1}{2}c}), V\cap (0_{S^m}\times \Lambda_{0}))$ can be written as
\begin{align*}
(B^{m}_{\frac{1}{2}}\times B^{m}_c\times B^1_1\times B^1_1 \times B^1_a\times B^{n-1}_{\frac{1}{2}}\times B^{n-1}_c, B^{m}_{\frac{1}{2}}\times \{0\}\times \Lambda_{0}),
\end{align*}
where all the balls, possibly except of $B^{m}_{\frac{1}{2}}$, are centered at $0$. Thus, there is a neighborhood $\widetilde{U}$ such that $(\widetilde{U}\cap J^{1}(S^m \times \R^{n-1}\times \R_{+}), \widetilde{U}\cap \widetilde{\Lambda})$ is contactomorphic to
$(R^{n+m-1}_{a\frac{1}{2}c}, \lambda_{0}\times D^{n+m-1}_{\frac{1}{2}})$. Hence, there is a neighborhood of $\Sigma_{S^m}\Lambda$ contactomorphic to a standard loose chart.
In conclusion, $\Sigma_{S^m}\Lambda$ is a loose Legendrian submanifold of $\C^{n+m}\times \R$.

\begin{remark}
Observe that the proof of Proposition~\ref{spunlooselegisloose} works not only for $\Sigma_{S^m} \Lambda$ but also for $\Lambda'\subset J^1(\R^{n+m})$ such that there exists a contactomorphism onto its image
\begin{align*}
(T^{\ast}K\times J^1(\R^{n}), dz + \theta_{\R^n} + \theta_{K})\hookrightarrow (J^1(\R^{n+m}), dz + \theta_{\R^{n+m}})
\end{align*}
 which maps $0_{K}\times \Lambda$ to $\Lambda'$. Here $K$ is a closed $m$-dimensional manifold, $0_{K}$ is the $0$-section of $T^{\ast}K$, $\theta_{\R^{n}}$ is the tautological 1-form on $T^{\ast} \R^n$, $\theta_{\R^{n+m}}$ is the tautological 1-form on $T^{\ast}\R^{n+m}$ and $\theta_{K}$ is the tautological 1-form on $T^{\ast} K$.
\end{remark}

\section{Forgetfulness of the front $S^{m}$-spinning construction}\label{forgetfrontsmconstrex}

In this section we provide four types of examples of Legendrian
submanifolds $\Lambda_{1},\Lambda_{2}\subset \C^n\times \R$ which
are not Legendrian isotopic, but which become Legendrian isotopic
after we apply the front $S^m$-spinning construction. Observe that
two Legendrian embeddings in $\C^n \times \R$ are always smoothly
isotopic whenever $n \ge 2$, by a classical result of Haefliger,
since the codimension is sufficiently high in these cases.

\subsection*{Example A}
Let $\Lambda_1$ and $\Lambda_2$ be two Legendrian knots with the
same rotation numbers and Thurston-Bennequin numbers. Assume that
$\Lambda'_1$ is not Legendrian isotopic to $\Lambda'_2$, where
$\Lambda'_i$ is a positive (or negative) stabilization of
$\Lambda_i$, which means that it has a local situation described in
Figure~\ref{looseproj}, $i=1,2$. From the proof of
Proposition~\ref{spunlooselegisloose} it follows that $\Sigma_{S^m}
\Lambda'_i$ is loose for $i=1,2$. If $m$ is even, using the fact
that $\Sigma_{S^m} \Lambda'_1$ and $\Sigma_{S^m} \Lambda'_2$ are two
loose embeddings of $S^1\times S^m$ with the same classical
invariants, Theorem A.4 in \cite{LLEIHDCM} together with the
h-principle for loose Legendrian knots \cite{LLEIHDCM} imply that
$\Sigma_{S^m} \Lambda'_1$ is Legendrian isotopic to $\Sigma_{S^m}
\Lambda'_2$. Observe that it is easy to construct examples of
Legendrian knots with the same classical invariants, that are in
different smooth isotopy classes and whose stabilizations are not Legendrian isotopic (not even smoothly isotopic).
However, a result of Etnyre and
Honda \cite{OCSALK} provides examples of $\Lambda_1$ and $\Lambda_2$ that have the same underlying
smooth knot and the same classical invariants, but for which $\Lambda'_1$ and $\Lambda'_2$ are not Legendrian isotopic. For
these examples \cite[Theorem A.4]{LLEIHDCM} gives that $\Lambda'_1$
is formally isotopic to $\Lambda'_2$ (this is simply the statement
that two such knots are isotopic using only Reidemeister moves of
type II and type III). In this case, since $\Sigma_{S^m} \Lambda'_1$
thus is formally isotopic to $\Sigma_{S^m} \Lambda'_2$, it now
follows by \cite{LLEIHDCM} that $\Sigma_{S^m} \Lambda'_1$ is
Legendrian isotopic to $\Sigma_{S^m} \Lambda'_2$ for all $m$.

\subsection*{Example B}
Let $\Lambda_1$ and $\Lambda_2$ be two Legendrian
knots with the same rotation numbers but different
Thurston-Bennequin numbers. As in Example A, we apply positive (or
negative) stabilization to both $\Lambda_1$ and $\Lambda_2$ to
produce a local situation described in Figure~\ref{looseproj}. We will use
$\Lambda'_i$ to denote the stabilization of $\Lambda_i$, $i=1,2$. Observe
that $r(\Lambda'_i)=r(\Lambda_i)+1$ (or
$r(\Lambda'_i)=r(\Lambda_i)-1$), $\mathrm{tb}(\Lambda'_i)=\mathrm{tb}(\Lambda_i)-1$ for $i=1,2$. Then
we apply the front $S^1$-spinning to $\Lambda'_i$, $i=1,2$. From the
properties of the front $S^1$-spinning construction it follows that
$\Sigma_{S^1} \Lambda'_1$, $\Sigma_{S^1}
\Lambda'_2$ are two embeddings of $S^1\times S^1$, $r(\Sigma_{S^1} \Lambda'_1) = r(\Sigma_{S^1}
\Lambda'_2)$ and, using the result from~\cite{TCOSMODGOET},
$\mathrm{tb}(\Sigma_{S^1}\Lambda'_i)=0$ for $i=1,2$. Hence, from Theorem A.4
and the  h-principle for loose Legendrian knots  from
\cite{LLEIHDCM} it follows that $\Sigma_{S^1} \Lambda'_1$ is
Legendrian isotopic to $\Sigma_{S^1} \Lambda'_2$.

\subsection*{Example C}
Let $\Lambda_1$ and $\Lambda_2$ be two Legendrian
knots with the same rotation numbers but different
Thurston-Bennequin numbers. We construct $\Lambda'_1$ and
$\Lambda'_2$ the same way as in Examples A and B. Again, note that
$r(\Lambda'_i)=r(\Lambda_i)+1$ (or $r(\Lambda'_i)=r(\Lambda_i)-1$),
$\mathrm{tb}(\Lambda'_i)=\mathrm{tb}(\Lambda_i)-1$ for $i=1,2$. Then we apply the front
$S^{2k+1}$-spinning to $\Lambda'_i$, where $i=1,2$ and $k\in \mathbb
N$. From the properties of the front $S^{2k+1}$-spinning
construction it follows that $\Sigma_{S^{2k+1}} \Lambda'_1$ and $\Sigma_{S^{2k+1}} \Lambda'_2$ are two embeddings of $S^1\times S^{2k+1}$,
$r(\Sigma_{S^{2k+1}} \Lambda'_1) = r(\Sigma_{S^{2k+1}} \Lambda'_2)$
and, using the result from \cite{TCOSMODGOET},
\begin{align*}
\mathrm{tb}(\Sigma_{S^{2k+1}}\Lambda'_i)=(-1)^{k+2}\frac{1}{2}\chi(S^1\times
S^{2k+1}) = 0,
\end{align*}
where $i=1,2$. So, we see that $\Sigma_{S^{2k+1}}\Lambda'_1$ and
$\Sigma_{S^{2k+1}}\Lambda'_2$
are two embeddings of $S^1\times S^{2k+1}$ and have the same classical invariants. Finally,  we apply the front
$S^{2l+1}$-spinning to $\Sigma_{S^{2k+1}}\Lambda'_i$ for $i=1,2$ and
$l\in \mathbb N$. From the properties of the front
$S^{2l+1}$-spinning construction it follows that
$\Sigma_{S^{2l+1}}\Sigma_{S^{2k+1}}\Lambda'_1$,
$\Sigma_{S^{2l+1}}\Sigma_{S^{2k+1}}\Lambda'_2$ are two embeddings of $S^1\times S^{2k+1}\times S^{2l+1}$ and they have the
same classical invariants. Hence, we use Theorem A.4 and the
h-principle for loose Legendrian knots from \cite{LLEIHDCM} and get
that $\Sigma_{S^{2l+1}}\Sigma_{S^{2k+1}}\Lambda'_1$ is Legendrian
isotopic to $\Sigma_{S^{2l+1}}\Sigma_{S^{2k+1}}\Lambda'_2$ for
$k,l\in \mathbb N$.

\subsection*{Example D}
This example is an extension of  Example C to high dimensions.
Let $\Lambda_1$ and $\Lambda_2$ be two loose Legendrian
submanifolds of $\C^{2m+1}\times \R$ which are embeddings of an $(2m+1)$-dimensional manifold $\Lambda$ and such that they have the same rotation classes but different
Thurston-Bennequin numbers. We apply the front
$S^{2k+1}$-spinning to $\Lambda_i$, where $i=1,2$ and $k\in \mathbb
N$. From the properties of the front $S^{2k+1}$-spinning
construction it follows that $\Sigma_{S^{2k+1}} \Lambda_1$ and $\Sigma_{S^{2k+1}} \Lambda_2$ are two embeddings of $\Lambda\times S^{2k+1}$,
$r(\Sigma_{S^{2k+1}} \Lambda_1) = r(\Sigma_{S^{2k+1}} \Lambda_2)$
and, using the same argument as in Example C,
$\mathrm{tb}(\Sigma_{S^{2k+1}}\Lambda_i)=0$ for $i=1,2$. So, we see that $\Sigma_{S^{2k+1}}\Lambda_1$ and
$\Sigma_{S^{2k+1}}\Lambda_2$ are embeddings of $\Lambda\times S^{2k+1}$ and have the same classical invariants.
Then
we apply the front
$S^{2l+1}$-spinning to $\Sigma_{S^{2k+1}}\Lambda_i$ for $i=1,2$ and
$l\in \mathbb N$. From the properties of the front
$S^{2l+1}$-spinning construction it follows that
$\Sigma_{S^{2l+1}}\Sigma_{S^{2k+1}}\Lambda_1$ and
$\Sigma_{S^{2l+1}}\Sigma_{S^{2k+1}}\Lambda_2$ are embeddings of $\Lambda\times S^{2k+1}\times S^{2l+1}$ and they have the
same classical invariants. Hence, we use Theorem A.4 and the
h-principle for loose Legendrian knots from \cite{LLEIHDCM} and get
that $\Sigma_{S^{2l+1}}\Sigma_{S^{2k+1}}\Lambda_1$ is Legendrian
isotopic to $\Sigma_{S^{2l+1}}\Sigma_{S^{2k+1}}\Lambda_2$ for
$k,l\in \mathbb N$.

\section*{Acknowledgements}
The authors are deeply grateful to Fr\'{e}d\'{e}ric Bourgeois,
Baptiste Chantraine, Yakov Eliashberg, Paolo Ghiggini, Vera V\'{e}rtesi and Hiro Lee
Tanaka for helpful conversations and interest in their work. In
addition, the authors would like to thank Paolo Ghiggini for
pointing out the way to simplify the proof of
Lemma~\ref{thomisomorphismmaininequality}. Finally, the authors are grateful to the
referee of an earlier version of this paper for many valuable comments and suggestions.

\end{document}